\UseRawInputEncoding 
\documentclass[12pt]{amsart}       
\usepackage{txfonts}
\usepackage{algorithm}
\usepackage{algorithmic}
\usepackage{amssymb}
\usepackage{eucal}
\usepackage{graphicx}
\usepackage{amsmath}
\usepackage{amscd}
\usepackage[all]{xy}           
\usepackage{tikz}
\usepackage{tikz-qtree}
\usepackage{amsfonts,latexsym}
\usepackage{xspace}
\usepackage{epsfig}
\usepackage{float}
\usepackage{color}
\usepackage{fancybox}
\usepackage{colordvi}
\usepackage{multicol}
\usepackage{colordvi}
\usepackage{wasysym}
\usepackage{makecell} 
\usepackage[numbers,sort&compress]{natbib}
\usepackage[active]{srcltx} 
\usepackage{CJK}
\ifpdf
  \usepackage[colorlinks,final,backref=page,hyperindex]{hyperref}
\else
  \usepackage[colorlinks,final,backref=page,hyperindex,hypertex]{hyperref}
\fi

\newcommand{\nc}{\newcommand}
\newcommand{\delete}[1]{}
\nc{\bfk}{{\bf k}}

\nc{\mlabel}[1]{\label{#1}}  
\nc{\mcite}[1]{\cite{#1}}  
\nc{\mref}[1]{\ref{#1}}  
\nc{\mbibitem}[1]{\bibitem{#1}} 

\delete{
\nc{\mlabel}[1]{\label{#1}  
{\hfill \hspace{1cm}{\small\tt{{\ }\hfill(#1)}}}}
\nc{\mcite}[1]{\cite{#1}{\small{\tt{{\ }(#1)}}}}  
\nc{\mref}[1]{\ref{#1}{{\tt{{\ }(#1)}}}}  
\nc{\mbibitem}[1]{\bibitem[\bf #1]{#1}} 
}

\newtheorem{theorem}{Theorem}[section]

\newtheorem{lemma}[theorem]{Lemma}

\newtheorem{corollary}[theorem]{Corollary}
\theoremstyle{definition}
\newtheorem{definition}[theorem]{Definition}

\newtheorem{conjecture}{Conjecture}

\newtheorem{tempex}[theorem]{Example}
\newtheorem{tempexs}[theorem]{Examples}
\newtheorem{temprmk}[theorem]{Remark}
\newtheorem{tempexer}{Exercise}[section]
\newenvironment{example}{\begin{tempex}\rm}{\end{tempex}}

\nc{\tred}[1]{\textcolor{red}{#1}} \nc{\tgreen}[1]{\textcolor{green}{#1}}
\nc{\tblue}[1]{\textcolor{blue}{#1}} \nc{\tpurple}[1]{\textcolor{purple}{#1}}

\nc{\hu}[1]{\tpurple{\underline{Hu:}#1 }}
\nc{\xing}[1]{\tblue{\underline{Xing:}#1 }}

\nc{\GS}{Gr\"obner-Shirshov\xspace}
\nc{\gsb}{Gr\"{o}bner-Shirshov basis\xspace}
\nc{\gsbs}{Gr\"{o}bner-Shirshov bases\xspace}
\nc\olie{operated Lie algebra\xspace}
\nc\olies{operated Lie algebras\xspace}
\nc\bfone{\mathbf{1}}

\nc\nas[1]{{#1}^\ast}
\nc\alsw[1]{{\rm ALSW}(#1)}
\nc\nlsw[1]{{\rm NLSW}(#1)}
\nc\clie[1]{[#1]}
\nc\lbar[1]{\overline{#1}}
\nc\suba[1]{|_{#1}}
\nc\coplie[1]{\bfk\nlsbw{#1}}
\nc\coplieo[2]{\bfk{\rm NLSBW}_{#2}(#1)}
\nc\lb[1]{\left[#1\right]}\nc\dt[1]{{t^{(#1)}}}
\nc{\Irr}{\mathrm{Irr}}
\nc\blw[1]{\lfloor#1\rfloor}
\nc\plie[1]{\mathfrak{S}(#1)}
\nc\plien[1]{\mathcal{N}(#1)}
\nc{\lc}{\lfloor} \nc{\rc}{\rfloor}
\nc\Id{\rm Id}\nc\sopm[1]{\mathfrak{S}^\star(#1)}

\nc\ordc{>_{{\rm Dl}}} \nc\ordqc{\geq_{{\rm Dl}}}
\nc\ord{>_{{\rm IM }}}\nc\ordq{\geq _{\rm IM}}
\nc\ordd{>_{{\rm IM}}}\nc\ordqd{\geq_{{\rm IM}}}
\nc\ordb{>_{{\rm IM}}}\nc\ordqb{\geq_{{\rm IM}}}
\nc\alsbw[1]{{\rm ALSBW}_{\ordq}(#1)} \nc\nlsbw[1]{{\rm NLSBW}_{\ordq}(#1)}
\nc\alsbwo[2]{{\rm ALSBW}_{#2}(#1)} \nc\nlsbwo[2]{{\rm NLSBW}_{#2}(#1)}\nc\bws[1]{{\lfloor#1\rfloor}}\nc\oplie{{\rm OLie}(X)}
\nc{\dep}{{\rm dep}}
\nc\ordt{\geq_{\rm dt}}
\nc\ordtl{>_{\rm dt}}

\begin{document}

\title[On the Greedoid Tutte Polynomial for simple rooted graphs]{On the Greedoid Tutte Polynomial for simple rooted graphs}

\author{Jianxuan Luo, Tingzeng Wu$^{*}$ and Hong-Jian Lai}\thanks{*Corresponding author}

\address{School of Mathematical Sciences, Xiamen University, Xiamen, Fujian 361005, China}
\email{a3566293588@163.com}

\address{School of Mathematics and Statistics, Qinghai Minzu University, Xining, Qinghai 810007, China}
\email{mathtzwu@163.com}

\address{School of Mathematics and System Sciences, Guangdong Polytechnic Normal University, Guangzhou 510665, China}
\email{hjlai2015@hotmail.com}

\date{\today}

\begin{abstract}
The Tutte polynomial, through its rank-generating formula, provides a unified framework for describing various combinatorial structures of graphs and matroids, 
and serves as an important polynomial invariant connecting graph theory, matroid theory, and their related applications.
Let $G=(V(G),E(G),r(G))$ be a simple rooted graph, where $V(G)$ is the vertex set, $E(G)$ is the edge set, and $r(G)$ is the root. 
Let $\Gamma(G)$ be the greedoid induced by the rooted graph $G$, and let its greedoid Tutte polynomial be denoted by $T(\Gamma(G);x,y)$.
Let $r=r(G)$, $L_r(G)=\{v\in V(G)\setminus\{r\}: rv\in E(G),\ d_G(v)=1\}$, and $\ell_r(G)=|L_r(G)|$. 
Gordon and McMahon proposed the following conjecture: for rooted graphs, $\operatorname{ord}_{x}T(\Gamma(G);x,y)=\ell_r(G)$,
where $\operatorname{ord}_{x}T(\Gamma(G);x,y)=\max\{a\in\mathbb Z_{\geq0}:x^a\mid T(\Gamma(G);x,y)\}$, that is, 
the highest power of $x$ dividing $T(\Gamma(G);x,y)$ is equal to the number of leaf vertices adjacent to the root. 
In this paper, we proved that this conjecture holds for all simple rooted graphs.
\end{abstract}

\makeatletter
\@namedef{subjclassname@2020}{\textup{2020} Mathematics Subject Classification}
\makeatother
\subjclass[2020]{
    05B35; 05C31; 05C05\underline{}
}

\keywords{ Greedoid; Tutte polynomial; Greedoid polynomial; Rooted graph}

\maketitle

\tableofcontents

\setcounter{section}{0}

\allowdisplaybreaks

\section{Introduction}
In this paper, unless otherwise specified, all graphs are allowed to have loops and multiple edges. 
A graph without loops and multiple edges is called a simple graph. Let $G=(V(G),E(G))$ be a graph with vertex set $V(G)$ and edge set $E(G)$. 
For $U\subseteq V(G)$, let $G[U]$ denote the subgraph of $G$ induced by $U$. For $v\in V(G)$, 
let $N_G(v)$ be the neighborhood of the vertex $v$ in $G$, and $d_G(v)$ be the degree of $v$, 
a vertex satisfying $d_G(v)=1$ is called a leaf vertex. 
A path in $G$ refers to a simple path with no repeated vertices. 
In particular, loops are not contained in any path.

The Tutte polynomial $T(G;x,y)$ of a graph $G$ can be defined via the rank function of the graph. 
For any $A\subseteq E(G)$, let $\rho_G(A)=|V(G)|-\kappa(G|A)$, where $G|A=(V(G),A)$ is the spanning subgraph determined by the edge set $A$, 
and $\kappa(G|A)$ denotes the number of connected components of $G|A$. 
Let $G=(V(G),E(G))$ be a graph. The Tutte polynomial of $G$ is defined as
\begin{equation}\label{eq:rooted-tutte}
T(G;x,y)=\sum_{A\subseteq E(G)}(x-1)^{\rho_G(E(G))-\rho_G(A)}(y-1)^{|A|-\rho_G(A)}.
\end{equation}
The Tutte polynomial is a very important graph invariant, which not only has wide applications in graph theory, 
but also plays a significant role in other fields such as knot theory and statistical physics.
For more research progress and applications of the Tutte polynomial, we refer to \cite{Brylawski,EllisMonaghanMoffatt2022,Oxley,Vertigan,Welsh}.

The Tutte polynomial of graphs is one of the most important polynomial invariants in graph theory and matroid theory. Since its rank-nullity expression depends only on the corresponding rank function, it can be naturally extended to combinatorial structures beyond matroids, in particular to greedoids.

\begin{definition}\rm(\cite{KnappNoble2025})
\label{def:greedoid}
A greedoid $\Gamma$ is an ordered pair $(E,\mathcal F)$, where $E$ is a finite set and $\mathcal F$ is a nonempty family of subsets of $E$ satisfying the following axioms:
\begin{enumerate}
 \item[\textup{(i)}] $\varnothing\in\mathcal F$;
 \item[\textup{(ii)}] for any $F,F'\in\mathcal F$, if $|F'|<|F|$, then there exists $x\in F\setminus F'$ such that $F'\cup\{x\}\in\mathcal F$.
\end{enumerate}
\end{definition}

The set $E$ is the ground set of $\Gamma$, and the elements of $\mathcal{F}$ are the feasible sets of $\Gamma$. 
The rank $\rho_{\Gamma}$ of a subset $A$ of $E$ is defined as $\rho_{\Gamma}(A)=\max\{|A'|:A'\subseteq A,\ A'\in\mathcal F\}$,
and we set $\rho(\Gamma)=\rho_{\Gamma}(E)$.

A rooted graph is a graph with a distinguished vertex called the root. 
For a rooted graph $G$ with vertex set $V(G)$, edge set $E(G)$, and root $r(G)$, we write $G=(V(G),E(G),r(G))$. 
For $A\subseteq E(G)$, let the rooted spanning subgraph $G|A=(V(G),A,r(G))$ be obtained from $G$ by deleting all edges in $E(G)\setminus A$ and keeping all vertices,
the connected component of $G|A$ containing the root $r(G)$ is called the root component of $G|A$, denoted by $C_G(A)$. 
If $C_G(A)$ is a tree and contains all edges of $A$, then $A$ is called a feasible set of $G$. 
Let $\mathcal F(G)=\{A\subseteq E(G): A \text{ is a feasible set of } G\}$. 
The greedoid induced by the rooted graph $G$ is denoted by $\Gamma(G)=(E(G),\mathcal F(G))$. 
For $A\subseteq E(G)$, the rank function is defined as $\rho_{\Gamma(G)}(A)=\max\{|A'|: A'\subseteq A,\ A'\in\mathcal F(G)\}$.
If $A\in\mathcal F(G)$ and $\rho(\Gamma(G))=|A|$, then $A$ is called a basis of $\Gamma(G)$. 

Generalizing the definition of the Tutte polynomial for matroids, McMahon and Gordon defined the Tutte polynomial for greedoids in \cite{GordonMcMahon1989}. 
For a greedoid $\Gamma$ with ground set $E$ and rank function $\rho_\Gamma$, its Tutte polynomial is defined as
\begin{equation}\label{eq:rooted-greedoid-tutte}
T(\Gamma;x,y) = \sum_{A \subseteq E} (x - 1)^{\rho(\Gamma)-\rho_\Gamma(A)} (y - 1)^{|A|-\rho_\Gamma(A)}.
\end{equation}
From the above definition, setting $x=y=2$, we obtain
$T(\Gamma;2,2)=\sum_{A\subseteq E}
(2-1)^{\rho(\Gamma)-\rho_\Gamma(A)}
(2-1)^{|A|-\rho_\Gamma(A)}
=\sum_{A\subseteq E}1
=2^{|E|}>0$.
Thus, $T(\Gamma;x,y)\not\equiv0$. When $\Gamma$ is a matroid, this definition reduces to the standard definition of the matroid Tutte polynomial. 
For a rooted graph $G$, the Tutte polynomial of the greedoid induced by $G$ is denoted by $T(\Gamma(G);x,y)$. 
For related research on the Tutte polynomial of matroids and greedoids, 
we refer to \cite{BauerEtAl2026,Brylawski,Crapo1969,EllisMonaghanMoffatt2022, Guan2023, GeelenZhou2008, KnappNoble2025,Zhou2004}.

For a long time, the factorization properties of the greedoid Tutte polynomial of rooted graphs 
have been studied to recover the combinatorial structure of the corresponding rooted graphs. 
In the original variables $t$ and $z$, McMahon \cite{McMahon1993} proved that for any rooted graph, 
the multiplicity of the factor $z+1$ in its greedoid polynomial is exactly equal to the number of greedoid loops in the rooted graph. 
Under the variable substitution $x=t+1$ and $y=z+1$, this result completely characterizes, 
from a graph-theoretic perspective, the highest power of $y$ that divides $T(\Gamma(G);x,y)$. 
However, the graph-theoretic structure corresponding to the factor $x$ has not been precisely characterized. 
Brylawski and Oxley \cite{Brylawski} only proved that for the matroid Tutte polynomial, 
the highest power of $x$ dividing the polynomial is equal to the number of coloops in the matroid. 
This result does not hold for greedoids. Nevertheless, for greedoids, the highest power of $x$ dividing $T(\Gamma(G);x,y)$ may still admit a combinatorial interpretation. 
In particular, Gordon and McMahon proposed the following conjecture in \cite{GordonMcMahon2022}:

\begin{conjecture}\rm(\cite{GordonMcMahon2022})
\label{conj:3352}
Let $G=(V(G),E(G),r(G))$ be a rooted graph, $r=r(G)$, $L_r(G)=\{v\in V(G)\setminus\{r\}: rv\in E(G),\ d_G(v)=1\}$, and $\ell_r(G)=|L_r(G)|$. Then
\begin{eqnarray*}
 \operatorname{ord}_{x}T(\Gamma(G);x,y)=\ell_r(G),
\end{eqnarray*}
where $\operatorname{ord}_{x}T(\Gamma(G);x,y)=\max\{a\in\mathbb Z_{\geq0}:x^a\mid T(\Gamma(G);x,y)\}$.
\end{conjecture}

We find that the above conjecture has counterexamples for multigraphs:
\begin{example}
\label{exa:multigraph-counterexample}
Take an arbitrary integer $q \geq 1$, let $[q]=\{1,\ldots,q\}$, and take a nonempty set $I\subseteq[q]$. For each $i\in I$, take an arbitrary positive integer $t_i$, and let $t=\sum_{i\in I}t_i$. Let $E_0=\{e_i=r(G)v_i:1\leq i\leq q\}$, and
$E_1=\{C_1^{ij}:i\in I,\ 1\leq j\leq t_i,\ C_1^{ij}\text{ is a loop incident to }v_i\}$.
Let $G=(V(G),E(G),r(G))$ be a connected rooted multigraph, where $r=r(G)$, $V(G)=\{r,v_1,\ldots,v_q\}$,
$E(G)=E_0\cup E_1$. In fact, for any $S\subseteq E(G)$, $S$ can be uniquely written as $S=A\cup B$, where $A\subseteq E_0$, $B\subseteq E_1$.
All loops are not contained in any feasible set, while $A$ is the edge set of a star tree centered at the root $r$. Hence $\rho_{\Gamma(G)}(S)=|A|$, and $\rho(\Gamma(G))=q$.

By the definition formula \eqref{eq:rooted-greedoid-tutte} of the greedoid Tutte polynomial and the binomial theorem, we obtain
\begin{align*}
 T(\Gamma(G);x,y)&=\sum_{A\subseteq E_0}\sum_{B\subseteq E_1}(x-1)^{q-|A|}(y-1)^{|B|}\\
 &=\left(\sum_{A\subseteq E_0}(x-1)^{q-|A|}\right)\left(\sum_{B\subseteq E_1}(y-1)^{|B|}\right)\\
 &=x^q y^t.
\end{align*}
On the other hand, for $i\in I$, each loop contributes $2$ to the degree, so $d_G(v_i)=1+2t_i>1$; for $i\notin I$, we have $d_G(v_i)=1$. Therefore,
$L_r(G)=\{v_i:i\in[q]\setminus I\}$, and $\ell_r(G)=q-|I|<q=\operatorname{ord}_{x}T(\Gamma(G);x,y)$.
Thus $G$ is a counterexample to Conjecture \ref{conj:3352}.
\end{example}

\begin{example}\label{exa:loop-extension-counterexamples}
Take arbitrary integers $n\geq4$ and $q\geq1$, let
$[q]=\{1,\ldots,q\}$, and take a nonempty set $I\subseteq[q]$.
For each $i\in I$, take an arbitrary positive integer $t_i$, and let
$t=\sum_{i\in I}t_i$. Take a path
$P_n=v_1v_2\ldots v_n$, designate $r=v_2$ as the root, and let
$w_1=v_1$. When $q\geq2$, take distinct vertices $w_2,\ldots,w_q$ outside $V(P_n)$.
Define the edge sets
$E_0=\{e_j=v_jv_{j+1}:2\leq j\leq n-1\}$,
$E_1=\{f_i=rw_i:1\leq i\leq q\}$,
$E_2=\{C_1^{ij}:i\in I,\ 1\leq j\leq t_i,\ C_1^{ij}\text{ is a loop incident to }w_i\}$.
Let $G=(V(G),E(G),r(G))$ be a rooted multigraph, where
$r(G)=r=v_2$, $V(G)=\{v_1,v_2,\ldots,v_n\}\cup\{w_2,\ldots,w_q\}$,
$E(G)=E_1\cup E_2\cup E_3$.
When $q=1$, we set $\{w_2,\ldots,w_q\}=\varnothing$.
For any $S\subseteq E(G)$, there is a unique decomposition $S=A\cup B\cup C$,
where $A\subseteq E_0$, $B\subseteq E_1$, $C\subseteq E_2$.

For $A\subseteq E_0$, let $\lambda(A)=\max\bigl\{j\in\{0,1,\ldots,n-2\}:\{e_2,e_3,\ldots,e_{j+1}\}\subseteq A\bigr\}$,
where for $j=0$, we set $\{e_2,e_3,\ldots,e_{j+1}\}=\varnothing$.
In other words, $\lambda(A)$ is the number of consecutive edges from the root $r=v_2$ along the path $v_2v_3\cdots v_n$ that belong to $A$. Then the vertex set of the root component of the rooted spanning subgraph $G|S$ is $V(C_G(S))=\{r\}\cup\{w_i:f_i\in D\}\cup\{v_3,v_4,\ldots,v_{\lambda(A)+2}\}$.
Here the last set is empty when $\lambda(A)=0$. Hence $\rho_{\Gamma(G)}(S)=|D|+\lambda(A),\rho(\Gamma(G))=n+q-2$.

We now compute the rank-generating expression corresponding to the long path root component. If $\lambda(A)=j<n-2$, then $e_2,\ldots,e_{j+1}\in A$, $e_{j+2}\notin A$, and the remaining $n-3-j$ edges can be chosen arbitrarily. When $\lambda(A)=n-2$, we must have $A=E_0$. Therefore, by the binomial theorem, we have
\begin{align*}
 R_{n-2}(x,y)&=\sum_{A\subseteq E_P}(x-1)^{n-2-\lambda(A)}(y-1)^{|A|-\lambda(A)}\\
 &=1+\sum_{j=0}^{n-3}(x-1)^{n-2-j}
 \sum_{s=0}^{n-3-j}\binom{n-3-j}{s}(y-1)^s\\
 &=1+\sum_{j=0}^{n-3}(x-1)^{n-2-j}y^{n-3-j}\\
 &=1+\sum_{h=1}^{n-2}(x-1)^hy^{h-1}.
\end{align*}
By the definition formula \eqref{eq:rooted-greedoid-tutte} of the greedoid Tutte polynomial, together with the above rank equalities and the binomial theorem, we obtain
\begin{align*}
 T(\Gamma(G);x,y) &=\sum_{A\subseteq E_0}\sum_{B\subseteq E_1}\sum_{C\subseteq E_2}(x-1)^{n+q-2-|B|-\lambda(A)}(y-1)^{|A|+|C|-\lambda(A)}\\
 &=\left(\sum_{B\subseteq E_S}(x-1)^{q-|B|}\right)\left(\sum_{C\subseteq E_L}(y-1)^{|C|}\right) R_{n-2}(x,y)\\
 &=x^qy^tR_{n-2}(x,y)\\
 &=x^qy^t\left(1+\sum_{h=1}^{n-2}(x-1)^hy^{h-1}\right).
\end{align*}
Since $n\geq4$, we have $R_{n-2}(0,y)=y-y^2+\ldots+(-1)^{n-2}y^{n-3}\not\equiv0$.
Thus $\operatorname{ord}_{x}T(\Gamma(G);x,y)=q$.

On the other hand, for $i\in I$, each loop contributes $2$ to the degree of the vertex, so
$d_G(w_i)=1+2t_i>1$; for
$i\in[q]\setminus I$, we have $d_G(w_i)=1$. The other leaf vertex
$v_n$ of the path is not adjacent to the root $r=v_2$, while the remaining vertices are not
root-adjacent leaves. Therefore $L_r(G)=\{w_i:i\in[q]\setminus I\}$,
so $\ell_r(G)=q-|I|<q=\operatorname{ord}_{x}T(\Gamma(G);x,y)$.
Thus $G$ is a counterexample to Conjecture \ref{conj:3352}.
\end{example}

Therefore, in this paper, we mainly prove that this conjecture holds for simple rooted graphs, and present the following main theorem.

\begin{theorem}
\label{thm:main}
Let $G=(V(G),E(G),r(G))$ be a simple rooted graph, $r=r(G)$, $L_r(G)=\{v\in V(G)\setminus\{r\}: rv\in E(G),\ d_G(v)=1\}$, and $\ell_r(G)=|L_r(G)|$. Then
\begin{eqnarray*}
 \operatorname{ord}_{x}T(\Gamma(G);x,y)=\ell_r(G),
\end{eqnarray*}
where $\operatorname{ord}_{x}T(\Gamma(G);x,y)=\max\{a\in\mathbb Z_{\geq0}:x^a\mid T(\Gamma(G);x,y)\}$.
\end{theorem}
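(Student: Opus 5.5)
The plan is to split the statement into a factorization step and a non-vanishing step. \emph{Factorization.} Let $e=rv$ with $v\in L_r(G)$. Since $v$ is a leaf attached to the root, for every $A\subseteq E(G)\setminus\{e\}$ the root component of $G|(A\cup\{e\})$ is the root component of $G|A$ with the pendant edge $e$ attached. Hence $\rho_{\Gamma(G)}(A\cup\{e\})=\rho_{\Gamma(G)}(A)+1$. Also $\rho(\Gamma(G))=\rho(\Gamma(G-v))+1$. Splitting the sum \eqref{eq:rooted-greedoid-tutte} according to whether $e\in A$ then gives
\[
T(\Gamma(G);x,y)=\bigl(1+(x-1)\bigr)T(\Gamma(G-v);x,y)=x\,T(\Gamma(G-v);x,y),
\]
exactly as in Example~\ref{exa:multigraph-counterexample}. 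Iterating over all of $L_r(G)$ gives $T(\Gamma(G);x,y)=x^{\ell_r(G)}T(\Gamma(G');x,y)$, where $G'$ is simple and $\ell_r(G')=0$. We can also discard the components not containing $r$: their edges never lie in a feasible set, so they only contribute a factor $y^k$. So we may assume $G$ is connected, simple, and has no leaf adjacent to $r$. The theorem then reduces to showing $T(\Gamma(G);0,y)\not\equiv 0$.

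\emph{Non-vanishing.} I would prove this by induction on $|E(G)|$, using the Gordon--McMahon deletion--contraction recursion for rooted graphs. For an edge $e$ incident to the root,
\[
T(\Gamma(G))=T(\Gamma(G/e))+(x-1)^{\rho(G)-\rho(G-e)}T(\Gamma(G-e)).
\]
Setting $x=0$, the second term carries the sign $(-1)^{\rho(G)-\rho(G-e)}$. The base cases are $G$ a single vertex, where $T=1$, and small graphs such as the path rooted at an interior vertex, where $R_{n-2}(0,y)$ was computed in Example~\ref{exa:loop-extension-counterexamples}. To control cancellation, I would not merely prove non-vanishing. I would strengthen the hypothesis to a \emph{sign statement}, for instance that the coefficient of the lowest (or highest) power of $y$ in $T(\Gamma(G);0,y)$ is $(-1)^{\rho(G)-c}$ for some explicit graph parameter $c$. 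Alternatively, I would show that $T(\Gamma(G);0,y)$ evaluated at some suitable value such as $y=-1$ or large $y$ has a predictable sign. Either choice should make both terms of the recursion agree in sign, and I would pick the edge $e$ so that this holds.

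\emph{Main obstacle.} Contraction $G/e$ can create parallel edges and loops at the new root. It can also create a root-adjacent leaf, and deletion $G-e$ can do the same, for instance when $e$ lies on a triangle through $r$. The examples show that the naive conjecture fails for multigraphs, so the induction must be run in a broader class of rooted multigraphs with an invariant adapted to it. My first attempt would replace $\ell_r$ by the number of root edges $rv$ such that every other edge at $v$ is a loop. Greedoid loops can then be stripped, since each contributes a factor of $y$. Parallel root edges would be handled by collapsing them, which requires a separate lemma for the polynomial of a bundle of parallel edges. The delicate case is when deletion of $e$ creates a new root leaf $v$. I would then apply the factorization step to pull out $x$, which kills the second term at $x=0$. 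In that case I would have to check that the contraction term alone is nonzero, and this is exactly where simplicity of the original $G$ (no parallel $rv$) must be used.
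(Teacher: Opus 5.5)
Your reduction is correct. Splitting the sum on a pendant root edge $e=rv$ gives $T(\Gamma(G);x,y)=x\,T(\Gamma(G-v);x,y)$. Removing such leaves creates no new ones, and the non-root components only contribute a factor $y^q$. This is equivalent to the paper's treatment of the $K_2$ branches via Lemma~\ref{lem:product}.

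The gap is the non-vanishing of $T(\Gamma(G);0,y)$ when $\ell_r(G)=0$, which is the entire content of the theorem. You never define the parameter $c$, so the strengthened invariant is unspecified. You never check that the two summands of the recursion agree in sign. You also concede that the induction must run through multigraphs, where the statement is false, without saying what replaces it there.

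Worse, the leading-coefficient version of your mechanism cannot work. Let $r$ be joined to $a,b,c$ with $E(G-r)=\{ab,bc\}$. Then $\ell_r(G)=0$, $T(\Gamma(G);0,y)=3y$, and $\rho(G-e)=\rho(G)$ for every root edge $e$.
\begin{itemize}
\item For $e=rb$, $G/e$ consists of a double edge from $r$ to $a$ and a double edge from $r$ to $c$, and $G-e$ is a rooted $4$-cycle. The two summands at $x=0$ are $y^2$ and $-y^2+3y$.
\item For $e=ra$ (and symmetrically $rc$), the summands are $y^2+2y$ and $-y^2+y$.
\end{itemize}
For every choice of $e$ the top coefficients are $+1$ and $-1$ in the same degree and cancel, so no choice of $e$ makes the terms agree in sign. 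Evaluating at $y=-1$ does not rescue it either: the choice $e=rb$ gives opposite signs ($1$ and $-4$). Note too that in a simple graph, deleting a root edge never creates a root leaf. So the real difficulty is cancellation between two nonzero summands, not the vanishing of one of them.

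The missing idea is that no recursion is needed. The paper splits $G$ at the root into branches $G_i$ with $H=G_i-r$ connected. For each branch it groups the subsets $A$ by the vertex set of the root component, which gives Lemma~\ref{lem:expansion}:
\[
T(\Gamma(G_i);0,y)=\sum_{U\in\mathcal A(G_i)}(-1)^{s-|U|}y^{|E(H[\overline U])|}T(J_U;1,y).
\]
Here the $U$-term has $y$-degree exactly $m-|\delta_H(U)|-(|U|-a_G(U))$.
\begin{itemize}
\item Simplicity gives $a_G(U)\le|U|$, and $s\ge2$ whenever $G_i\not\cong K_2$.
\item Connectivity of $H$ gives $|\delta_H(U)|\ge1$ for $\varnothing\ne U\ne V(H)$.
\end{itemize}
Hence only $U=\varnothing$ and $U=V(H)$ reach degree $m$. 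The coefficient of $y^m$ is $(-1)^s$ if $k<s$ and $(-1)^s+1$ if $k=s$.

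The remaining case is $k=s$ odd, which is exactly the example above. There one passes to $y^{m-1}$. The terms with $|\delta_H(U)|=1$ cancel in complementary pairs $\{U,\overline U\}$. The bridgeless graph $J_{V(H)}=G_i$ contributes $s$ by Corollary~\ref{cor:leading}(ii), so the coefficient is nonzero.
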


This paper mainly focuses on the research related to Conjecture \ref{conj:3352}, and the organization of the paper is as follows. In Section 2, we introduce some necessary lemmas. In Section 3, we prove that for simple rooted graphs, the highest power of $x$ dividing $T(\Gamma(G);x,y)$ is equal to the number of leaf vertices adjacent to the root.

\section{Preliminaries}\label{sec:prelim}

In this section, we introduce some lemmas needed for the proof of the main theorem.

\begin{lemma}
\label{lem:rank}
For any $A\subseteq E(G)$, we have
\begin{eqnarray*}
 \rho_{\Gamma(G)}(A)=|V(C_G(A))|-1.
\end{eqnarray*}
In particular, if $G$ is connected and $|V(G)|=n$, then
$\rho(\Gamma(G))=n-1$.
\end{lemma}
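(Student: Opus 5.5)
We prove the two inequalities $\rho_{\Gamma(G)}(A)\le |V(C_G(A))|-1$ and $\rho_{\Gamma(G)}(A)\ge |V(C_G(A))|-1$ separately, working directly from the definition of feasible sets.

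\emph{Upper bound.} Let $A'\subseteq A$ be feasible. By definition, $C_G(A')$ is a tree containing every edge of $A'$, so $|A'|=|V(C_G(A'))|-1$. Since $A'\subseteq A$, the root component of $G|A'$ is a subgraph of the root component of $G|A$, which gives $V(C_G(A'))\subseteq V(C_G(A))$. Hence $|A'|\le |V(C_G(A))|-1$. Taking the maximum over all feasible $A'\subseteq A$ yields the upper bound.

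\emph{Lower bound.} We exhibit a feasible subset of $A$ of size $|V(C_G(A))|-1$. The root component $C_G(A)$ is connected, so it has a spanning tree $T$, and $E(T)\subseteq A$ since every edge of $C_G(A)$ comes from $A$. Loops never lie in a spanning tree, so no loops arise in $T$. In $G|E(T)$ the root lies in $T$. Every vertex outside $V(T)=V(C_G(A))$ is isolated in $G|E(T)$, because all edges of $E(T)$ have both ends in $V(T)$. Therefore $C_G(E(T))=T$, which is a tree containing all edges of $E(T)$. So $E(T)$ is feasible, with $|E(T)|=|V(C_G(A))|-1$.

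\emph{The special case.} If $G$ is connected with $|V(G)|=n$, take $A=E(G)$. Then $C_G(E(G))=G$ has all $n$ vertices, and $\rho(\Gamma(G))=n-1$. The only point needing care is the identification $C_G(E(T))=T$ in the lower bound, and this follows from the isolation observation above.
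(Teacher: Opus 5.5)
Your proof is correct and follows the paper's argument essentially step for step. The upper bound comes from $V(C_G(A'))\subseteq V(C_G(A))$ for feasible $A'\subseteq A$, and the lower bound from the edge set of a spanning tree of $C_G(A)$. Your isolation observation makes explicit the identification $C_G(E(T))=T$, which the paper only asserts.
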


\begin{proof}
Take any $A'\subseteq A$ with $A'\in\mathcal F(G)$. By the definition of feasible sets, the root component $C_G(A')$ of $G|A'$ is a tree containing all edges of $A'$. Therefore, $|A'|=|V(C_G(A'))|-1$. Since $A'\subseteq A$, every vertex reachable from the root $r(G)$ by edges in $A'$ is also reachable from $r(G)$ by edges in $A$, so $V(C_G(A'))\subseteq V(C_G(A))$. Hence $|A'|\leq |V(C_G(A))|-1$. Taking the maximum over all $A'\subseteq A$ with $A'\in\mathcal F(G)$, we obtain $\rho_{\Gamma(G)}(A)\leq |V(C_G(A))|-1$.

Conversely, take a spanning tree $T$ of the connected graph $C_G(A)$, and let $A_T=E(T)$. Since $T$ is a spanning tree of $C_G(A)$, we have $A_T\subseteq A$, and $|A_T|=|V(C_G(A))|-1$. In the rooted spanning subgraph $G|A_T$, the connected component containing the root $r(G)$ is exactly $T$, and $T$ contains all edges of $A_T$. Hence $A_T\in\mathcal F(G)$. By the definition of $\rho_{\Gamma(G)}(A)$, we have $\rho_{\Gamma(G)}(A)\geq |A_T|=|V(C_G(A))|-1$. Combining the two inequalities above, we obtain $\rho_{\Gamma(G)}(A)=|V(C_G(A))|-1$.

In particular, if $G$ is connected and $|V(G)|=n$, then $C_G(E(G))=G$. Therefore,
\begin{eqnarray*}
\rho(\Gamma(G))=\rho_{\Gamma(G)}(E(G))=|V(C_G(E(G)))|-1=|V(G)|-1=n-1.
\end{eqnarray*}
\end{proof}

\begin{definition}\label{def:nonzero}
For a nonzero polynomial $P(x,y)\in\mathbb Z[x,y]$, define
$\operatorname{ord}_{x}P
=\max\{a\in\mathbb Z_{\geq0}:x^a\mid P(x,y)\}$.
Equivalently, $\operatorname{ord}_{x}P=a$ if and only if
$P=x^aQ$ and $Q(0,y)\not\equiv0$.
\end{definition}

\begin{lemma}\label{lem:outside}
Let $G_r=C_G(E(G))$ be the connected component of $G$ containing the root $r(G)$, and let $q=|E(G)\setminus E(G_r)|$.
Then
\begin{equation}\label{eq:outside}
 T(\Gamma(G);x,y)=y^qT(\Gamma(G_r);x,y),
\end{equation}
where the root of $G_r$ is still $r(G)$. Therefore,
$\operatorname{ord}_{x}T(\Gamma(G);x,y)
=\operatorname{ord}_{x}T(\Gamma(G_r);x,y)$, and
$\ell_r(G)=\ell_r(G_r)$.
\end{lemma}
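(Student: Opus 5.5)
The plan is to split the sum defining $T(\Gamma(G);x,y)$ according to which edges lie in the root component. By Lemma~\ref{lem:rank}, $\rho_{\Gamma(G)}(S)=|V(C_G(S))|-1$ for every $S\subseteq E(G)$. Since every edge of $E(G)\setminus E(G_r)$ has no endpoint in $G_r$, it can never lie in the root component of any $G|S$. Thus we write each $S\subseteq E(G)$ uniquely as $S=A\cup B$ with $A\subseteq E(G_r)$ and $B\subseteq E(G)\setminus E(G_r)$.

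The first key step is to show that $C_G(S)$ and $C_{G_r}(A)$ have the same vertex set, where $G_r$ carries the root $r(G)$. A path from the root in $G|S$ uses only edges incident to vertices reachable from the root, and these all lie in $G_r$. Hence such a path uses only edges of $A$, and conversely. Consequently $\rho_{\Gamma(G)}(S)=\rho_{\Gamma(G_r)}(A)$. Taking $S=E(G)$ also gives $\rho(\Gamma(G))=\rho(\Gamma(G_r))$.

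Substituting into \eqref{eq:rooted-greedoid-tutte}, the exponent of $x-1$ depends only on $A$. The exponent of $y-1$ equals $|A|-\rho_{\Gamma(G_r)}(A)+|B|$. The double sum therefore factors as
\[
T(\Gamma(G);x,y)=\Bigl(\sum_{B\subseteq E(G)\setminus E(G_r)}(y-1)^{|B|}\Bigr)\,T(\Gamma(G_r);x,y)=y^q\,T(\Gamma(G_r);x,y),
\]
where the last equality is the binomial theorem. This proves \eqref{eq:outside}.

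For the order statement, note that $y^q$ contains no factor of $x$. Write $T(\Gamma(G_r);x,y)=x^aQ$ with $Q(0,y)\not\equiv0$, which is possible because $T\not\equiv0$ (its value at $(2,2)$ is positive). Then $T(\Gamma(G);x,y)=x^a y^qQ$ and $y^qQ(0,y)\not\equiv 0$. By Definition~\ref{def:nonzero}, the orders agree.

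Finally, $\ell_r(G)=\ell_r(G_r)$ because any $v\in L_r(G)$ is adjacent to $r$ and so lies in $G_r$. Moreover, degrees of vertices of $G_r$ are the same in $G$ and in $G_r$, since all edges at such vertices belong to $G_r$. Hence the defining conditions coincide in both graphs.

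The only real care needed is the identification of the root components in the first step; everything else is routine.
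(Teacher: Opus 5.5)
Your proof is correct and has the same overall structure as the paper's. You split $S=A\cup B$, show that the rank of $S$ in $\Gamma(G)$ equals $\rho_{\Gamma(G_r)}(A)$, factor the sum and apply the binomial theorem, then read off the order and compare leaf sets via degrees.

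The one genuine difference is how you establish $\rho_{\Gamma(G)}(A\cup B)=\rho_{\Gamma(G_r)}(A)$. The paper argues greedoid-theoretically in two steps:
\begin{enumerate}
\item It uses Lemma~\ref{lem:greedoid-loop} to show every edge outside $G_r$ is a greedoid loop, so $\rho_{\Gamma(G)}(A\cup\{b\})=\rho_{\Gamma(G)}(A)$ for each such $b$. It then invokes the cited rank lemma (Lemma~\ref{lem:rank-union}) to pass from single edges to all of $A\cup B$.
\item Only then does it identify $C_G(A)=C_{G_r}(A)$ to switch from $\Gamma(G)$ to $\Gamma(G_r)$.
\end{enumerate}
You instead apply Lemma~\ref{lem:rank} once on each side and observe directly that $V(C_G(A\cup B))=V(C_{G_r}(A))$. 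This holds because vertices reachable from the root lie in $G_r$, and a connected component contains every edge incident to its vertices.

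Your route is shorter and self-contained: it needs neither the path characterization of greedoid loops nor the cited rank-union lemma. The paper's route instead shows the identity is an instance of a general greedoid fact: adjoining greedoid loops never changes rank. That fact is what really produces the factor $y^q$, and it would apply verbatim to the loops of any greedoid.

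Your treatment of the order, unwinding Definition~\ref{def:nonzero} rather than citing Lemma~\ref{lem:ord-product}, is the same argument in specialized form.
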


To prove Lemma \ref{lem:outside}, we first give some lemmas needed in the proof.

\begin{lemma}\rm(\cite{KnappNoble2025})\label{lem:rank-union}
Let $\Gamma=(E,\mathcal F)$ be a greedoid with rank function
$\rho_\Gamma$, and let $A,B\subseteq E$.
If for every $b\in B$, $\rho_\Gamma(A\cup\{b\})=\rho_\Gamma(A)$, then
\begin{eqnarray*}
 \rho_\Gamma(A\cup B)=\rho_\Gamma(A).
\end{eqnarray*}
\end{lemma}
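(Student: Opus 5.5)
The plan is to argue by contradiction directly from the greedoid axioms in Definition~\ref{def:greedoid}, using only the exchange axiom (ii) once.

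First, record the easy inequality $\rho_\Gamma(A\cup B)\geq\rho_\Gamma(A)$. It holds because every feasible subset of $A$ is also a feasible subset of $A\cup B$, so the maximum defining $\rho_\Gamma(A\cup B)$ is taken over a larger family.

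For the reverse inequality, suppose $\rho_\Gamma(A\cup B)>\rho_\Gamma(A)$. By the definition of rank:
\begin{itemize}
\item choose $F\in\mathcal F$ with $F\subseteq A$ and $|F|=\rho_\Gamma(A)$;
\item choose $F'\in\mathcal F$ with $F'\subseteq A\cup B$ and $|F'|=\rho_\Gamma(A\cup B)$.
\end{itemize}
Then $|F|<|F'|$, so axiom (ii) gives an element $z\in F'\setminus F$ with $F\cup\{z\}\in\mathcal F$. Since $z\in F'\subseteq A\cup B$, there are two cases.
\begin{itemize}
\item If $z\in A$, then $F\cup\{z\}$ is a feasible subset of $A$ of size $\rho_\Gamma(A)+1$. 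This contradicts the definition of $\rho_\Gamma(A)$.
\item If $z\in B$ (and not in $A$), then $F\cup\{z\}$ is a feasible subset of $A\cup\{z\}$. Hence $\rho_\Gamma(A\cup\{z\})\geq\rho_\Gamma(A)+1$, which contradicts the hypothesis $\rho_\Gamma(A\cup\{z\})=\rho_\Gamma(A)$ applied with $b=z$.
\end{itemize}
Both cases are impossible, so $\rho_\Gamma(A\cup B)=\rho_\Gamma(A)$.

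There is no real obstacle here. The one point to get right is to augment the \emph{maximum} feasible subset $F$ of $A$ by a \emph{single} element. This is what lets us apply the hypothesis elementwise rather than needing submodularity, which greedoid rank functions generally lack. So the argument should avoid any matroid-style submodularity or induction on $|B|$, and use only one application of the exchange axiom.
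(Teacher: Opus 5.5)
Your proof is correct and complete. You fix a maximum feasible subset $F\subseteq A$ and a maximum feasible subset $F'\subseteq A\cup B$. One application of axiom (ii) then gives $z\in F'\setminus F$ with $F\cup\{z\}\in\mathcal F$. If $z\in A$, this contradicts the maximality of $F$; if $z\in B$, it contradicts the hypothesis at $b=z$.

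The paper gives no proof of this lemma of its own; it simply cites Knapp and Noble. Your argument therefore makes the statement self-contained, using nothing beyond Definition \ref{def:greedoid}, and it is the standard augmentation proof of this property of greedoid rank. You are also right that it needs no submodularity, which greedoid rank functions lack.

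In the only place the paper uses the lemma, the proof of Lemma \ref{lem:outside}, every $b\in B$ is a greedoid loop. There, any feasible $F\subseteq A\cup B$ automatically satisfies $F\cap B=\varnothing$, so $F\subseteq A$. Hence $\rho_{\Gamma(G)}(A\cup B)=\rho_{\Gamma(G)}(A)$ follows directly from the definition of rank, without the lemma or the exchange axiom. Your general proof is still the right way to justify the lemma as stated.
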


Let $\Gamma=(E,\mathcal F)$ be a greedoid, and let $e\in E$.
If $e$ is not contained in any feasible set of $\Gamma$, i.e.,
$\{F\in\mathcal F:e\in F\}=\varnothing$, then $e$
is called a loop of $\Gamma$. In particular, for a rooted graph
$G=(V(G),E(G),r(G))$, an edge $e\in E(G)$ is
a loop of $\Gamma(G)$ if and only if for every
$F\in\mathcal F(G)$, we have $e\notin F$.

\begin{lemma}\label{lem:greedoid-loop}
Let $G=(V(G),E(G),r(G))$ be a rooted graph, and let $e\in E(G)$.
Then $e$ is a loop of $\Gamma(G)$ if and only if there is no path in $G$ starting from the root $r(G)$ and containing the edge $e$.
\end{lemma}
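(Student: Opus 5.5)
The plan is to prove the two directions separately, using only the definition of feasible sets and the characterization of rank in Lemma~\ref{lem:rank}.

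\emph{($\Leftarrow$).} Suppose $e$ is not a loop. Then there is a feasible set $F\in\mathcal F(G)$ with $e\in F$. By definition, $C_G(F)$ is a tree containing every edge of $F$, so both endpoints of $e=uv$ lie in the tree $C_G(F)$.

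\begin{enumerate}
\item[(a)] If $e$ is a graph loop ($u=v$), it cannot lie in a tree. So $e$ is not a graph loop, and $u\neq v$.
\item[(b)] In the tree $C_G(F)$, take the unique paths from $r(G)$ to $u$ and to $v$. Since $uv$ is a tree edge, one of these paths, say the one to $v$, passes through $u$ and ends with the edge $uv$.
\item[(c)] This gives a (simple) path in $G$ starting at $r(G)$ and containing $e$.
\end{enumerate}

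This is the contrapositive of ``no path from $r(G)$ through $e$ $\Rightarrow$ $e$ is a loop.''

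\emph{($\Rightarrow$).} Suppose there is a path $P=r(G)=x_0x_1\cdots x_k$ in $G$ containing $e$, say $e=x_{i-1}x_i$. Because paths contain no loops and have no repeated vertices, we build the feasible set explicitly.

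\begin{enumerate}
\item[(a)] Let $F=\{x_0x_1,\dots,x_{i-1}x_i\}$, the initial segment of $P$ ending with $e$.
\item[(b)] In $G|F$, the root component is exactly this subpath, since every edge of $F$ is connected to the root through earlier edges. The subpath is a tree because its vertices are distinct.
\item[(c)] The root component contains all edges of $F$, so $F\in\mathcal F(G)$ and $e\in F$. Hence $e$ is not a loop.
\end{enumerate}

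Multiple edges cause no trouble here: choosing the specific edge $e$ among any parallel edges still yields a path, and hence a tree.

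\emph{Main obstacle.} The only delicate point is step (b) of the first direction: extracting a path from the root that actually \emph{contains} $e$, not merely one that reaches an endpoint of $e$. I plan to handle it by the standard rooted-tree fact that, for a tree edge $uv$, exactly one endpoint lies on the root path of the other. This follows because removing $uv$ splits the tree into two components, and the root lies in one of them.
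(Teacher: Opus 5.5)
Your proof is correct and follows the paper's argument: in one direction you show that the edge set of a root path through $e$ is feasible (you truncate $P$ at $e$, while the paper takes all of $E(P)$, and both work), and in the other you split the tree $C_G(F)$ at $e$ and take the root path to the endpoint in the component not containing the root. The reference to Lemma~\ref{lem:rank} in your plan can be dropped, since neither direction uses it.
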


\begin{proof}
First suppose that $e$ is a loop of $\Gamma(G)$. Assume to the contrary that there is a path $P$ in $G$ starting from the root $r(G)$ and containing the edge $e$. Let $A=E(P)$. Since $P$ is a tree and the root component of the rooted spanning subgraph $G|A$ is exactly $P$, we have $C_G(A)=P$ containing all edges of $A$. By the definition of feasible sets, $A\in\mathcal F(G)$. On the other hand, $e\in E(P)=A$, which contradicts the fact that $e$ is not contained in any feasible set. Therefore, there is no path in $G$ starting from $r(G)$ and containing $e$.

Conversely, suppose that $e$ is not a loop of $\Gamma(G)$. Then there exists $F\in\mathcal F(G)$ such that $e\in F$. By the definition of feasible sets, $C_G(F)$ is a tree containing all edges of $F$, hence $e\in E(C_G(F))$. Since a tree contains no loops, the two endpoints of $e$ are distinct. Let $e=uv$. Among the two connected components of $C_G(F)-e$, exactly one does not contain the root $r(G)$; assume without loss of generality that this component contains the vertex $v$. Then the unique path in $C_G(F)$ from $r(G)$ to $v$ must pass through the edge $e$, thus yielding a path starting from $r(G)$ and containing $e$, contradicting the assumption. Hence $e$ is a loop of $\Gamma(G)$.
\end{proof}

\begin{lemma}\label{lem:ord-product}
If $P(x,y),Q(x,y)\in\mathbb Z[x,y]\setminus\{0\}$, then
\begin{eqnarray*}
 \operatorname{ord}_{x}\bigl(P(x,y)Q(x,y)\bigr)=\operatorname{ord}_{x}P(x,y)+\operatorname{ord}_{x}Q(x,y).
\end{eqnarray*}
\end{lemma}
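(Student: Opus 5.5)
We prove Lemma \ref{lem:ord-product} by reducing to the case where neither factor is divisible by $x$, and then evaluating at $x=0$.

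By Definition \ref{def:nonzero}, write $P=x^{a}P_1$ and $Q=x^{b}Q_1$ with $a=\operatorname{ord}_{x}P$, $b=\operatorname{ord}_{x}Q$, $P_1,Q_1\in\mathbb Z[x,y]$, $P_1(0,y)\not\equiv0$ and $Q_1(0,y)\not\equiv0$. Then
\begin{equation*}
PQ=x^{a+b}P_1Q_1 .
\end{equation*}
Since $\mathbb Z[x,y]$ is an integral domain, $PQ\neq 0$, so its order is defined, and the equivalent characterization in Definition \ref{def:nonzero} applies. It therefore suffices to show that $(P_1Q_1)(0,y)\not\equiv0$.

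Substituting $x=0$ is a ring homomorphism $\mathbb Z[x,y]\to\mathbb Z[y]$, so
\begin{equation*}
(P_1Q_1)(0,y)=P_1(0,y)\,Q_1(0,y).
\end{equation*}
This is a product of two nonzero elements of the integral domain $\mathbb Z[y]$, hence nonzero. Consequently $\operatorname{ord}_{x}(PQ)=a+b$.

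The one point needing care is that the characterization ``$\operatorname{ord}_{x}P=a$ if and only if $P=x^aQ$ with $Q(0,y)\not\equiv0$'' really holds. This follows because $x^{a+1}\mid x^aR$ in $\mathbb Z[x,y]$ exactly when $x\mid R$, which happens exactly when $R(0,y)=0$. I expect no real obstacle here.
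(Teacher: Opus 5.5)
Your proposal is correct and follows essentially the same route as the paper's proof: factor $P=x^aP_0$, $Q=x^bQ_0$ with $P_0(0,y),Q_0(0,y)\not\equiv0$, and use that $\mathbb Z[y]$ is an integral domain to get $(P_0Q_0)(0,y)\not\equiv0$. Your extra remarks, that $PQ\neq0$ and that the characterization in Definition \ref{def:nonzero} is valid, are correct additions the paper leaves implicit.
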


\begin{proof}
Let $a=\operatorname{ord}_{x}P$,
$b=\operatorname{ord}_{x}Q$. By Definition \ref{def:nonzero}, there exist
$P_0,Q_0\in\mathbb Z[x,y]$ such that
$P=x^aP_0$, $Q=x^bQ_0$,
$P_0(0,y)\not\equiv0$ and $Q_0(0,y)\not\equiv0$.
Then $PQ=x^{a+b}P_0Q_0$. Since $\mathbb Z[y]$ is an integral domain,
we have $(P_0Q_0)(0,y)=P_0(0,y)Q_0(0,y)\not\equiv0$. Hence
$\operatorname{ord}_{x}(PQ)=a+b$.
\end{proof}

\begin{proof}[\textbf{Proof of Lemma \ref{lem:outside}}]
Let $r=r(G)$ and $L=E(G)\setminus E(G_r)$. By the definition of the root component, we have $E(G)=E(G_r)\cup L$, $|L|=q$.

First, we prove the equality needed in the proof of this lemma. Take any $e\in L$. Since $e$ lies in some connected component of $G$ not containing the root $r$, there is no path in $G$ starting from $r$ and containing $e$. By Lemma \ref{lem:greedoid-loop}, we know that $e$ is a loop of $\Gamma(G)$.

Now take any $A\subseteq E(G_r)$ and $B\subseteq L$. For each $b\in B$, since $b$ is a loop, $b$ is not contained in any feasible set of $\Gamma(G)$. Therefore,
$\{F\in\mathcal F(G):F\subseteq A\cup\{b\}\}
=\{F\in\mathcal F(G):F\subseteq A\}$.
By the definition of the rank function $\rho_{\Gamma(G)}$, we have
$\rho_{\Gamma(G)}(A\cup\{b\})=\rho_{\Gamma(G)}(A)$.
Then by Lemma \ref{lem:rank-union}, we obtain
\begin{equation}\label{eq:rho}
\rho_{\Gamma(G)}(A\cup B)=\rho_{\Gamma(G)}(A).
\end{equation}

On the other hand, since $A\subseteq E(G_r)$, the rooted spanning subgraphs $G|A$ and $G_r|A$ have the same root component, i.e.,
$C_G(A)=C_{G_r}(A)$. By Lemma \ref{lem:rank}, we have
\begin{equation}\label{eq:gagr}
\rho_{\Gamma(G)}(A)=|V(C_G(A))|-1 =|V(C_{G_r}(A))|-1=\rho_{\Gamma(G_r)}(A).
\end{equation}
Combining equations \eqref{eq:rho} and \eqref{eq:gagr}, we obtain that for any $A\subseteq E(G_r)$ and $B\subseteq L$,
\begin{eqnarray*}
 \rho_{\Gamma(G)}(A\cup B)=\rho_{\Gamma(G_r)}(A).
\end{eqnarray*}
In particular, taking $A=E(G_r)$ and $B=L$ in the above equation, since $E(G)=E(G_r)\cup L$, we have
\begin{eqnarray*}
 \rho(\Gamma(G))=\rho_{\Gamma(G)}(E(G)) =\rho_{\Gamma(G)}(E(G_r)\cup L)=\rho_{\Gamma(G_r)}(E(G_r))=\rho(\Gamma(G_r)).
\end {eqnarray*}

Next, we prove the polynomial identity \eqref{eq:outside}. For any $S\subseteq E(G)$, let
$A=S\cap E(G_r)$, $B=S\cap L$.
Since $E(G)=E(G_r)\cup L$, the edge set $S$ can be uniquely written as $S=A\cup B$, where $A\subseteq E(G_r)$, $B\subseteq L$. Hence, by the rank-generating definition formula \eqref{eq:rooted-greedoid-tutte} of the greedoid Tutte polynomial, the above unique decomposition, $\rho_{\Gamma(G)}(A\cup B)=\rho_{\Gamma(G_r)}(A)$ and $\rho(\Gamma(G))=\rho(\Gamma(G_r))$, we have
\begin{align*}
 T(\Gamma(G);x,y)&=\sum_{A\subseteq E(G_r)}\sum_{B\subseteq L}(x-1)^{\rho(\Gamma(G))-\rho_{\Gamma(G)}(A\cup B)}(y-1)^{|A|+|B|-\rho_{\Gamma(G)}(A\cup B)}\\
 &=\left(\sum_{A\subseteq E(G_r)}(x-1)^{\rho(\Gamma(G_r))-\rho_{\Gamma(G_r)}(A)}(y-1)^{|A|-\rho_{\Gamma(G_r)}(A)} \right)\left(\sum_{B\subseteq L}(y-1)^{|B|}\right)\\
 &=T(\Gamma(G_r);x,y)\sum_{j=0}^{q}\binom{q}{j}(y-1)^j\\
 &=T(\Gamma(G_r);x,y) \bigl(1+(y-1)\bigr)^q\\
 &=y^qT(\Gamma(G_r);x,y).
\end{align*}

Now we prove that $\operatorname{ord}_{x}T(\Gamma(G);x,y)
=\operatorname{ord}_{x}T(\Gamma(G_r);x,y)$. By the definition of the greedoid Tutte polynomial \eqref{eq:rooted-greedoid-tutte}, setting $x=y=2$, we have
\begin{eqnarray*}
 T(\Gamma(G_r);2,2) =\sum_{A\subseteq E(G_r)}(2-1)^{\rho(\Gamma(G_r))-\rho_{\Gamma(G_r)}(A)}(2-1)^{|A|-\rho_{\Gamma(G_r)}(A)}=\sum_{A\subseteq E(G_r)}1 =2^{|E(G_r)|}>0.
\end{eqnarray*}
Therefore, $T(\Gamma(G_r);x,y)\not\equiv0$, i.e., $T(\Gamma(G_r);x,y)$ is a nonzero polynomial. Since $\operatorname{ord}_{x}(y^q)=0$, by the polynomial identity \eqref{eq:outside} and Lemma \ref{lem:ord-product}, we have
\begin{eqnarray*}
 \operatorname{ord}_{x}T(\Gamma(G);x,y) = \operatorname{ord}_{x}(y^q)+\operatorname{ord}_{x}T(\Gamma(G_r);x,y)=\operatorname{ord}_{x}T(\Gamma(G_r);x,y).
\end{eqnarray*}

Finally, we prove that the number of root-adjacent leaves is invariant. By the definition of the root component, every vertex adjacent to the root $r$ belongs to $V(G_r)$, hence $N_G(r)=N_{G_r}(r)$. Take any $v\in V(G_r)$ and $vw\in E(G)$. Since there is a path in $G_r$ from $r$ to $v$, the subgraph consisting of this path together with the edge $vw$ contains a path from $r$ to $w$. Hence $w$ and $r$ lie in the same connected component of $G$. Therefore, $w\in V(G_r)$ and $vw\in E(G_r)$. This shows that for each $v\in V(G_r)$, we have $d_{G_r}(v)=d_G(v)$. By the definition of $L_r(G)$, we have
\begin{eqnarray*}
\begin{aligned}
 L_r(G)&=\{v\in N_G(r):d_G(v)=1\}\\
 &=\{v\in N_{G_r}(r):d_{G_r}(v)=1\}\\
 &=L_r(G_r).
\end{aligned}
\end{eqnarray*}
Therefore, $\ell_r(G)=|L_r(G)|=|L_r(G_r)|=\ell_r(G_r)$. 

This completes the proof of Lemma \ref{lem:outside}.
\end{proof}

Let $G=(V(G),E(G),r(G))$ be a connected simple rooted graph, $r=r(G)$, $G-r=G[V(G)\setminus\{r\}]$.
Let $\mathcal C(G-r)=\{H_1,\ldots,H_t\}$ be the connected components of $G-r$. For each $i\in\{1,\ldots,t\}$, let $G_i=(G[V(H_i)\cup\{r\}],r)$, and call $G_i$ the $i$-th root component of $G$. Since $H_1,\ldots,H_t$ are all connected components of $G-r$, and the simple graph $G$ has no loops at the root $r$, each edge either connects $r$ to some $H_i$, or its two endpoints belong to the same unique $H_i$. Therefore,
$E(G)=\bigcup_{i=1}^{t}E(G_i)$,
$E(G_i)\cap E(G_j)=\varnothing$, and
$V(G_i)\cap V(G_j)=\{r\}$, where $i\neq j$.
\begin{lemma}\label{lem:product}
For a connected simple rooted graph $G$ with $|V(G)|\geq2$, we have
\begin{equation}
 T(\Gamma(G);x,y)=\prod_{i=1}^{t}T(\Gamma(G_i);x,y). \label{eq:root-product}
\end{equation}
\end{lemma}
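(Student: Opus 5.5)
The plan is to prove that the rank function is additive across the root components, and then to factor the subset sum.

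\textbf{Step 1: additivity of the rank.} Any $S\subseteq E(G)$ decomposes uniquely as $S=\bigcup_{i=1}^t S_i$ with $S_i=S\cap E(G_i)$, since the $E(G_i)$ partition $E(G)$. I claim
\begin{equation*}
\rho_{\Gamma(G)}(S)=\sum_{i=1}^t \rho_{\Gamma(G_i)}(S_i).
\end{equation*}
By Lemma \ref{lem:rank}, it suffices to show
\begin{equation*}
V(C_G(S))\setminus\{r\}=\bigsqcup_{i=1}^t \bigl(V(C_{G_i}(S_i))\setminus\{r\}\bigr).
\end{equation*}

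\textbf{Key observation.} Take a path in $G|S$ from $r$ to a vertex $v\in V(H_i)$, and consider the last time it leaves $r$. From that point on, the path stays in $H_i$. This is because any edge leaving $V(H_i)$ must go to $r$: there are no edges between distinct $H_j$, since they are components of $G-r$. So this tail of the path lies entirely in $G_i|S_i$, and hence $v\in V(C_{G_i}(S_i))$. Conversely, $G_i|S_i$ is a subgraph of $G|S$, so $C_{G_i}(S_i)\subseteq C_G(S)$. The $V(H_i)$ are pairwise disjoint, and $V(G_i)\cap V(G_j)=\{r\}$, which gives the disjoint union. Subtracting $1$ (the root) on each side of the vertex count then yields the additivity.

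\textbf{Step 2: the full rank.} Taking $S=E(G)$ gives $\rho(\Gamma(G))=\sum_i\rho(\Gamma(G_i))$. Alternatively, each $G_i$ is connected (because $G$ is connected, $H_i$ is joined to $r$), so Lemma \ref{lem:rank} gives $\rho(\Gamma(G_i))=|V(H_i)|$, and these sum to $|V(G)|-1$.

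\textbf{Step 3: factoring the sum.} The exponents in \eqref{eq:rooted-greedoid-tutte} are additive:
\begin{equation*}
\rho(\Gamma(G))-\rho_{\Gamma(G)}(S)=\sum_i\bigl(\rho(\Gamma(G_i))-\rho_{\Gamma(G_i)}(S_i)\bigr),
\qquad
|S|-\rho_{\Gamma(G)}(S)=\sum_i\bigl(|S_i|-\rho_{\Gamma(G_i)}(S_i)\bigr).
\end{equation*}
The sum over $S\subseteq E(G)$ is the same as the sum over tuples $(S_1,\dots,S_t)$ with $S_i\subseteq E(G_i)$. Each summand therefore splits as a product, and the whole sum factors into $\prod_i T(\Gamma(G_i);x,y)$, exactly as in the proof of Lemma \ref{lem:outside}.

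\textbf{Main obstacle.} The only delicate point is the path argument in Step 1: showing that reachability from $r$ within $H_i$ uses only edges of $S_i$. The structural fact that edges of $G$ either join $r$ to some $H_i$ or lie inside a single $H_i$ settles it. Loops at $r$ would break the partition, but none exist because $G$ is simple. The hypothesis $|V(G)|\ge2$ only ensures that $t\ge1$.
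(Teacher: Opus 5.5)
Your proof is correct, but it takes a different route from the paper's.

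The paper works at the level of feasible sets. It proves that $A\in\mathcal F(G)$ if and only if $A\cap E(G_i)\in\mathcal F(G_i)$ for every $i$, so that $\Gamma(G)=\Gamma(G_1)\oplus\cdots\oplus\Gamma(G_t)$. It then cites the Gordon--McMahon product formula for direct sums (Lemma \ref{lem:direct-sum-product}). Establishing that decomposition requires checking the tree condition in both directions, including a case analysis for cycles that do or do not pass through $r$.

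You bypass feasible sets entirely. Lemma \ref{lem:rank} turns rank into a count of vertices reachable from $r$. Additivity of rank then reduces to a reachability fact: a root path in $G|S$ to $v\in V(H_i)$ never returns to $r$ (paths are simple), so it stays inside $G_i$ and uses only edges of $S_i$. You then factor the subset sum by hand. This is exactly the content of the direct-sum lemma specialized to this situation, and it handles all $t$ factors at once rather than by iterating a two-factor statement.

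The paper's route yields the stronger structural fact that the greedoid itself splits as a direct sum, which can be reused for any invariant that is multiplicative under $\oplus$. Your route is shorter and self-contained, avoids the acyclicity case analysis, and makes clear that only rank additivity is needed for the polynomial identity.
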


To prove Lemma \ref{lem:product}, we first introduce the following lemma.

\begin{lemma}\rm(\cite{GordonMcMahon1989})\label{lem:direct-sum-product}
Let $\Gamma_i=(E_i,\mathcal F_i)$ be greedoids, $i=1,2$, with $E_1\cap E_2=\varnothing$. Let
\begin{eqnarray*}
\Gamma_1\oplus\Gamma_2=bigl(E_1\cup E_2,\{F_1\cup F_2: F_1\in\mathcal F_1,\ F_2\in\mathcal F_2\}\bigr).
\end{eqnarray*}
If $\Gamma=\Gamma_1\oplus\Gamma_2$, then
$T(\Gamma;x,y)=T(\Gamma_1;x,y)T(\Gamma_2;x,y)$.
\end{lemma}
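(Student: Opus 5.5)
The proof reduces everything to additivity of the rank function over the two disjoint ground sets, after which the defining sum \eqref{eq:rooted-greedoid-tutte} factors directly.

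\textbf{Step 1: the rank of $\Gamma$ is additive.} Let $\rho_1,\rho_2$ be the rank functions of $\Gamma_1,\Gamma_2$. Fix $A\subseteq E_1\cup E_2$ and write $A_1=A\cap E_1$ and $A_2=A\cap E_2$. Because $E_1\cap E_2=\varnothing$, this decomposition is unique. I claim
\[
\rho_\Gamma(A)=\rho_1(A_1)+\rho_2(A_2).
\]
For the upper bound, take any feasible $F=F_1\cup F_2\subseteq A$ with $F_i\in\mathcal F_i$. Since $F_i\subseteq E_i$, we have $F_i\subseteq A_i$, so $|F_i|\le\rho_i(A_i)$. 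Disjointness then gives $|F|=|F_1|+|F_2|\le\rho_1(A_1)+\rho_2(A_2)$. For the lower bound, choose $F_i\subseteq A_i$ in $\mathcal F_i$ with $|F_i|=\rho_i(A_i)$. Then $F_1\cup F_2$ is feasible in $\Gamma$, lies inside $A$, and has size $\rho_1(A_1)+\rho_2(A_2)$.

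Taking $A=E_1\cup E_2$ gives $\rho(\Gamma)=\rho(\Gamma_1)+\rho(\Gamma_2)$. Whether $\Gamma$ itself satisfies the greedoid axioms is not needed for the polynomial identity, since $T$ depends only on $\rho_\Gamma$. If desired, the axioms can be checked as well: the empty set is feasible, and for the augmentation axiom, $|F'|<|F|$ forces $|F_i'|<|F_i|$ for some $i$, so one can augment inside $\Gamma_i$.

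\textbf{Step 2: factor the sum.} Substitute Step 1 into \eqref{eq:rooted-greedoid-tutte}, using $|A|=|A_1|+|A_2|$. The exponent of $(x-1)$ becomes
\[
\bigl(\rho(\Gamma_1)-\rho_1(A_1)\bigr)+\bigl(\rho(\Gamma_2)-\rho_2(A_2)\bigr),
\]
and the exponent of $(y-1)$ becomes
\[
\bigl(|A_1|-\rho_1(A_1)\bigr)+\bigl(|A_2|-\rho_2(A_2)\bigr).
\]
The map $A\mapsto(A_1,A_2)$ is a bijection between subsets of $E_1\cup E_2$ and $2^{E_1}\times 2^{E_2}$. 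So the sum over $A$ becomes a double sum over $A_1\subseteq E_1$ and $A_2\subseteq E_2$ of a product of two factors, each depending on only one index. This double sum is exactly $T(\Gamma_1;x,y)\,T(\Gamma_2;x,y)$.

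I do not expect a genuine obstacle. The only point needing care is in Step 1: every feasible set of $\Gamma$ contained in $A$ splits as $F_1\cup F_2$ with each $F_i\subseteq A_i$. This holds precisely because the ground sets are disjoint.
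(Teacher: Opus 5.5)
Your proof is correct, and it is the standard argument: additivity $\rho_\Gamma(A)=\rho_1(A\cap E_1)+\rho_2(A\cap E_2)$ together with the bijection $A\mapsto(A\cap E_1,A\cap E_2)$. The paper does not prove this lemma itself (it quotes it from Gordon and McMahon), but your factorization of the double sum is the same device it uses in the proof of Lemma~\ref{lem:outside}, where the loops in $L$ act as a second summand of rank zero.
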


\begin{proof}[\textbf{Proof of Lemma \ref{lem:product}}]
For each $i\in\{1,\ldots,t\}$, let $E_i=E(G_i)$, $\mathcal F_i=\mathcal F(G_i)$.
Take any $A\subseteq E(G)$, and let $A_i=A\cap E_i$. Since
$E(G)=\bigcup_{i=1}^{t}E_i$, we have
$A=\bigcup_{i=1}^{t}A_i$. We now prove that $A\in\mathcal F(G)$ if and only if for every $i\in\{1,\ldots,t\}$, $A_i\in\mathcal F_i$.

If $A\in\mathcal F(G)$, then by the definition of feasible sets, $C_G(A)$ is a tree rooted at $r$ with edge set $A$. Fix $i\in\{1,\ldots,t\}$, 
and define the graph $T_i$ with vertex set and edge set $V(T_i)=V(C_G(A))\cap V(G_i)$ and $E(T_i)=A_i$, respectively.
Since $A_i\subseteq A=E(C_G(A))$, $A_i\subseteq E(G_i)$, and $T_i$ is a subgraph of the tree $C_G(A)$, it follows that $T_i$ is acyclic.

We now prove that $T_i$ is connected. It suffices to show that every vertex $v\in V(T_i)$ is connected to the root $r$ in $T_i$. Take any $v\in V(T_i)$. If $v=r$, then by the definition of the root component, we have $r\in V(C_G(A))$, and by the definition of the root component $G_i$, we have $r\in V(G_i)$, so $r\in V(C_G(A))\cap V(G_i)=V(T_i)$. Let $P_r=(\{r\},\varnothing)$, then $P_r$ is a trivial path of length $0$ from $r$ to itself in $T_i$. Therefore, the vertex $v=r$ is connected to the root $r$ in $T_i$. 

If $v\neq r$, then by $V(G_i)=V(H_i)\cup\{r\}$, we have $v\in V(H_i)$. Let $P_v$ be the unique path in the tree $C_G(A)$ from $r$ to $v$. The graph $P_v-r$ is a connected subgraph of $G-r$ containing $v$, and $H_i$ is the connected component of $G-r$ containing $v$, so $P_v-r\subseteq H_i$, hence $P_v\subseteq G_i$. Since $P_v$ is a subgraph of $C_G(A)$ and $E(C_G(A))=A$, we have $E(P_v)\subseteq A$. Since $P_v\subseteq G_i$, we have $E(P_v)\subseteq E(G_i)$. Therefore, $E(P_v)\subseteq A\cap E(G_i)$. By the definitions of $E_i=E(G_i)$ and $A_i=A\cap E_i$, we have $A\cap E(G_i)=A_i$, and by the definition of $T_i$, we have $A_i=E(T_i)$. In summary, $E(P_v)\subseteq A\cap E(G_i)=A_i=E(T_i)$. Moreover, according to $P_v\subseteq C_G(A)$ and $P_v\subseteq G_i$, we have $V(P_v)\subseteq V(C_G(A))\cap V(G_i)=V(T_i)$. Combining the above edge inclusion, we have $P_v\subseteq T_i$. Hence $P_v$ is a path in $T_i$ connecting $r$ and $v$, i.e., $v$ is connected to $r$ in $T_i$. By the arbitrariness of $v$, every vertex in $V(T_i)$ belongs to the connected component of $T_i$ containing the root $r$, so $T_i$ has only one connected component, i.e., $T_i$ is connected. Since $T_i$ is acyclic, $T_i$ is a tree. Since $E(T_i)=A_i$, every edge of the rooted spanning subgraph $G_i|A_i$ belongs to the connected component $T_i$ containing the root $r$. Hence $C_{G_i}(A_i)=T_i$, and thus $A_i\in\mathcal F_i$.

Conversely, if $A_i\in\mathcal F_i$ for every $i\in\{1,\ldots,t\}$, then by the definition of feasible sets, $T_i=C_{G_i}(A_i)$ is a tree containing the root $r$ with edge set $A_i$. Let $T=\bigcup_{i=1}^{t}T_i$. Take any $v\in V(T)$, then there exists $i\in\{1,\ldots,t\}$ such that $v\in V(T_i)$. Since $T_i$ is connected and $r\in V(T_i)$, there is a path in $T_i$ connecting $r$ and $v$, which is also a subgraph of $T$. Therefore, every vertex in $V(T)$ is connected to $r$ in $T$, so $T$ is connected.

We now prove that $T$ is acyclic. Suppose to the contrary that $T$ contains a cycle $C$. We consider two cases according to whether $r\in V(C)$.

\textbf{Case 1}: If $r\notin V(C)$.

Then $C$ is a connected subgraph of $G-r$. Since $H_1,\ldots,H_t$ are all connected components of $G-r$, there exists a unique $i\in\{1,\ldots,t\}$ such that $C\subseteq H_i$. Hence $C\subseteq G_i$. For any $e\in E(C)$, since $C\subseteq T$, there exists $j$ such that $e\in E(T_j)=A_j$. Since both endpoints of $e$ belong to $V(H_i)$, and distinct $H_i$'s are pairwise disjoint, we must have $j=i$. Hence $E(C)\subseteq E(T_i)$, and consequently $C\subseteq T_i$, contradicting the fact that $T_i$ is acyclic.

\textbf{Case 2}: If $r\in V(C)$.

Then after deleting the vertex $r$ and the edges incident to $r$ from $C$, the resulting graph $C-r$ is a connected subgraph of $G-r$, so there exists a unique $i\in\{1,\ldots,t\}$ such that $C-r\subseteq H_i$. Every edge in $C$ either has both endpoints in $V(H_i)$, or has one endpoint $r$ and the other in $V(H_i)$; hence $C\subseteq G_i$. Since $C\subseteq T$ and distinct root branches have no common vertices other than $r$, we have $E(C)\subseteq E(T_i)$, so $C\subseteq T_i$, again contradicting the fact that $T_i$ is acyclic.

Both cases lead to contradictions, so $T$ is acyclic. Since $T$ is connected, $T$ is a tree. Moreover, since $E(T_i)=A_i$ and $A=\bigcup_{i=1}^{t}A_i$, we have
$E(T)=\bigcup_{i=1}^{t}E(T_i)=\bigcup_{i=1}^{t}A_i=A$.
Thus $T$ is a connected subgraph of the rooted spanning subgraph $G|A$ containing the root $r$ with edge set $A$. Since $E(G|A)=A=E(T)$, there is no edge in $G|A$ that belongs to the root component but not to $T$; and every non-root vertex in the root component lies on a path starting from $r$ with all edges in $A$, so this vertex also belongs to $V(T)$. Therefore $T=C_G(A)$. Since $T$ is a tree and $E(T)=A$, by the definition of feasible sets, we have $A\in\mathcal F(G)$. The equivalence is proved.

Thus
\begin{eqnarray*}
 \mathcal F(G)=\left\{ F_1\cup\cdots\cup F_t:F_i\in\mathcal F_i\ (1\leq i\leq t)\right\},
\end{eqnarray*}
i.e.,
\begin{eqnarray*}
 \Gamma(G)=\Gamma(G_1)\oplus\cdots\oplus\Gamma(G_t).
\end{eqnarray*}
By Lemma \ref{lem:direct-sum-product}, we have
\begin{eqnarray*}
 T(\Gamma(G);x,y) = T\left(\oplus_{i=1}^{t}\Gamma(G_i);x,y\right) = \prod_{i=1}^{t}T(\Gamma(G_i);x,y).
\end{eqnarray*}
\end{proof}

When $G$ is $K_2$ rooted at either endpoint, by direct computation from \eqref{eq:rooted-greedoid-tutte}, we have $T(\Gamma(K_2);x,y)=(x-1)+1=x$.
Since $G$ is connected and simple, for each $i$,
\begin{eqnarray*}
 G_i\cong K_2
 \quad \text{if and only if}\quad
 V(H_i)=\{v_i\},\ rv_i\in E(G),\ d_G(v_i)=1.
\end{eqnarray*}
Therefore, the single-edge root components $G_i\cong K_2$ are in one-to-one correspondence with the leaf vertices in $L_r(G)$.

For a connected simple rooted graph $G=(V(G),E(G),r(G))$,
let $H=G-r(G)$ be a connected graph with $|V(H)|\geq1$, let $s=|V(H)|$, $m=|E(H)|$, and $k=d_G(r(G))$.
Since $H$ is connected, we have $s\geq1$. Since $G$ is connected, the root $r(G)$ is adjacent to at least one vertex in $V(H)$, so $k\geq1$. 
Since $G$ is a simple graph, there is at most one edge between $r(G)$ and each vertex in $V(H)$, so $k\leq s$. Therefore, $1\leq k\leq s$.

For $U\subseteq V(H)$, $J_U=G[\{r(G)\}\cup U]$ denotes the subgraph of $G$ induced by the vertex set $\{r(G)\}\cup U$, 
and define the family of root-connected vertex sets of $G$:
\begin{eqnarray*}
 \mathcal A(G)=\{U\subseteq V(H):J_U\text{ is connected}\}.
\end{eqnarray*}
Here the elements of $\mathcal A(G)$ are subsets of vertices of $V(H)$. Since $G$ is a simple graph,
$J_\varnothing=G[\{r(G)\}]$ satisfies $V(J_\varnothing)=\{r(G)\}$ and $E(J_\varnothing)=\varnothing$,
hence $J_\varnothing\cong K_1$. By the usual convention that a single-vertex graph is connected, we have $\varnothing\in\mathcal A(G)$.

For two disjoint vertex subsets $X,Y\subseteq V(H)$ of the graph $H$, let $E_H(X,Y)=\{xy\in E(H):x\in X,\ y\in Y\}$.
For $U\subseteq V(H)$, let $V(H)\setminus U$ denote the complement of $U$ in $V(H)$, and define
\begin{eqnarray*}
 \delta_H(U)=E_H(U,V(H)\setminus U)=\{uv\in E(H):u\in U,\ v\in V(H)\setminus U\}.
\end{eqnarray*}
The set $\delta_H(U)$ is called the edge cut determined by $(U,V(H)\setminus U)$, and $|\delta_H(U)|$ denotes the number of edges in this edge cut.
Similarly, $E_G(\{r(G)\},U)=\{r(G)u\in E(G):u\in U\}$ is the set of all edges in $G$ connecting the root $r(G)$ to vertices in $U$; let $a_G(U)=|E_G(\{r(G)\},U)|$.
Thus $a_G(U)$ is the number of edges between the root $r(G)$ and the vertex set $U$. Since $G$ is a simple graph, for each $u\in U$, there is at most one edge between $r(G)$ and $u$, therefore, $0\leq a_G(U)\leq |U|$.

\begin{lemma}\label{lem:expansion}
Let $G=(V(G),E(G),r(G))$ be a connected simple rooted graph, and assume that $H=G-r(G)$ is connected with $|V(H)|\geq1$. Let $s=|V(H)|$, $m=|E(H)|$, and for $U\subseteq V(H)$, let $J_U=G[\{r(G)\}\cup U]$, $\mathcal A(G)=\{U\subseteq V(H):J_U\text{ is connected}\}$. Then
\begin{equation}\displaystyle
 T(\Gamma(G);0,y) =\sum_{U\in\mathcal A(G)} (-1)^{s-|U|}y^{|E(H[V(H)\setminus U])|}T(J_U;1,y),
 \label{eq:expansion}
\end{equation}
where $T(J_U;1,y)$ denotes the Tutte polynomial of the graph $J_U$, and $T(J_\varnothing;1,y)=1$.
\end{lemma}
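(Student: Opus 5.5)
We start from the defining sum \eqref{eq:rooted-greedoid-tutte} with $\rho(\Gamma(G))=s$ (Lemma \ref{lem:rank}, since $|V(G)|=s+1$), and then set $x=0$. By Lemma \ref{lem:rank}, $\rho_{\Gamma(G)}(A)=|V(C_G(A))|-1$. We group the subsets $A\subseteq E(G)$ according to the vertex set of the root component: write $V(C_G(A))=\{r(G)\}\cup U$. This $U$ always lies in $\mathcal A(G)$, because $C_G(A)$ is a connected subgraph of $J_U$ spanning $\{r(G)\}\cup U$, so $J_U$ is connected. Thus
\begin{eqnarray*}
T(\Gamma(G);0,y)=\sum_{U\in\mathcal A(G)}(-1)^{s-|U|}\sum_{A:\,V(C_G(A))=\{r\}\cup U}(y-1)^{|A|-|U|}.
\end{eqnarray*}

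\textbf{Characterising the sets $A$ for a fixed $U$.} The edges of $G$ split into three disjoint classes:
\begin{enumerate}
\item[(a)] $E(J_U)$;
\item[(b)] the cut edges between $\{r\}\cup U$ and $V(H)\setminus U$, namely $\delta_H(U)$ together with the root edges $E_G(\{r\},V(H)\setminus U)$;
\item[(c)] $E(H[V(H)\setminus U])$.
\end{enumerate}
The condition $V(C_G(A))=\{r\}\cup U$ is equivalent to three requirements: $A\cap E(J_U)$ is a connected spanning subgraph of $J_U$; $A$ contains no edge of class (b); and $A\cap$(c) is arbitrary. Writing $A=A_1\cup A_3$ accordingly, the exponent splits as
\begin{eqnarray*}
|A|-|U|=(|A_1|-|U|)+|A_3|.
\end{eqnarray*}
The free sum over $A_3$ gives $\sum_{A_3}(y-1)^{|A_3|}=y^{|E(H[V(H)\setminus U])|}$ by the binomial theorem.

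\textbf{The sum over $A_1$.} What remains is the sum over connected spanning edge sets $A_1$ of $J_U$ of $(y-1)^{|A_1|-\rho_{J_U}(A_1)}$, using the graphic rank $\rho_{J_U}(A_1)=|U|+1-1=|U|$ for connected spanning $A_1$. In the graph Tutte polynomial \eqref{eq:rooted-tutte} evaluated at $x=1$, the factor $(x-1)^{\rho(E)-\rho(A)}$ vanishes unless $\rho(A)=\rho(E)$. Since $J_U$ is connected, this happens exactly when $A$ is connected spanning, with the convention $0^0=1$. Hence the inner sum equals $T(J_U;1,y)$. For $U=\varnothing$ the only choice is $A_1=\varnothing$, giving $1$.

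\textbf{Main obstacle.} The delicate step is the careful verification of the equivalence in the characterisation step. It requires checking that forbidding the class (b) edges is both necessary and sufficient for the root component to be exactly $\{r\}\cup U$, including the root edges to $V(H)\setminus U$. It also requires checking that edges inside $V(H)\setminus U$ never affect $C_G(A)$. Finally, one must handle the evaluation at $x=0$ and $x=1$ correctly, in particular that $(-1)^{s-|U|}$ arises from $(0-1)^{s-|U|}$ and that the $0^0$ convention is applied consistently.
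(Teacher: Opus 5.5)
Your proposal is correct and follows essentially the same route as the paper. You group the sets $A$ by the vertex set $\{r(G)\}\cup U$ of the root component, forbid the crossing edges, and split $A$ into a connected spanning edge set of $J_U$ plus an arbitrary subset of $E(H[V(H)\setminus U])$. The two factors are then $T(J_U;1,y)$ and $y^{|E(H[V(H)\setminus U])|}$, and the equivalence you flag as the main obstacle is exactly what the paper verifies in detail (closure of the root component under edges of $A$ for necessity, a first-crossing-edge argument for sufficiency), going through as you describe.
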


\begin{proof}
By the definition formula \eqref{eq:rooted-greedoid-tutte} of the greedoid Tutte polynomial and setting $x=0$, we have
\begin{eqnarray*}
 T(\Gamma(G);0,y)=\sum_{A\subseteq E(G)} (-1)^{\rho(\Gamma(G))-\rho_{\Gamma(G)}(A)}(y-1)^{|A|-\rho_{\Gamma(G)}(A)}.
\end{eqnarray*}
For any $A\subseteq E(G)$, let $U_A=V(C_G(A))\setminus\{r(G)\}$.
Since $C_G(A)$ is a connected rooted subgraph of $G$, the induced subgraph $J_{U_A}=G[\{r(G)\}\cup U_A]$ is connected, hence $U_A\in\mathcal A(G)$.

Fix $U\in\mathcal A(G)$, and let $\overline U=V(H)\setminus U$. We now show that the edge sets $A\subseteq E(G)$ satisfying $U_A=U$ are in one-to-one correspondence with ordered pairs $(A_U,A_{\overline U})$ satisfying the following conditions, where $A=A_U\cup A_{\overline U}$:
\begin{enumerate}
 \item[\textup{(i)}] $A_U\subseteq E(J_U)$, and $J_U|A_U$ is connected;
 \item[\textup{(ii)}] $A_{\overline U}\subseteq E(H[\overline U])$.
\end{enumerate}

First, suppose $A\subseteq E(G)$ satisfies $U_A=U$, and define $A_U=A\cap E(J_U)$ and $A_{\overline U}=A\cap E(H[\overline U])$. Since $U_A=V(C_G(A))\setminus\{r(G)\}=U$, we have $V(C_G(A))=\{r(G)\}\cup U$. Take any $e=xy\in A$. If $x\in V(C_G(A))$, then by the definition of the root component $C_G(A)$, there is a path from $r(G)$ to $x$ with edge set contained in $A$. Combining this path with the edge $e$, we see that $y$ is also reachable from $r(G)$ by edges in $A$, hence $y\in V(C_G(A))$. Therefore, there is no edge in $A$ with exactly one endpoint in $\{r(G)\}\cup U$. By the definitions of the edge cut $\delta_H(U)$ and the edge set $E_G(\{r(G)\},\overline U)$, this is equivalent to $A\cap\delta_H(U)=\varnothing$ and $A\cap E_G(\{r(G)\},\overline U)=\varnothing$. Since $V(G)=\{r(G)\}\cup U\cup\overline U$, and these three vertex sets are pairwise disjoint, every edge $e\in A$ has both endpoints either in $\{r(G)\}\cup U$, or both in $\overline U$. In the first case $e\in E(J_U)$, and in the second case $e\in E(H[\overline U])$. Therefore,
\begin{eqnarray*}
 A =\bigl(A\cap E(J_U)\bigr) \cup\bigl(A\cap E(H[\overline U])\bigr) = A_U\cup A_{\overline U}.
\end{eqnarray*}
By definition, $A_U\subseteq E(J_U)$ and $A_{\overline U}\subseteq E(H[\overline U])$, so condition \textup{(ii)} holds. Moreover, the vertex set of $C_G(A)$ is $\{r(G)\}\cup U=V(J_U)$, and its edge set is precisely the set of edges in $A$ whose both endpoints lie in $\{r(G)\}\cup U$; hence $E(C_G(A))=A\cap E(J_U)=A_U$. Thus $J_U|A_U=C_G(A)$. Since the root component $C_G(A)$ is connected, $J_U|A_U$ is connected, so condition \textup{(i)} also holds.

We now prove the uniqueness of the above ordered pair. Suppose $(A'_U,A'_{\overline U})$ also satisfies conditions \textup{(i)} and \textup{(ii)}, and $A=A'_U\cup A'_{\overline U}$. Since $V(J_U)=\{r(G)\}\cup U$, $V(H[\overline U])=\overline U$, and $(\{r(G)\}\cup U)\cap\overline U=\varnothing$, we have $E(J_U)\cap E(H[\overline U])=\varnothing$. Since $A'_U\subseteq E(J_U)$ and $A'_{\overline U}\subseteq E(H[\overline U])$, intersecting both sides of $A=A'_U\cup A'_{\overline U}$ with $E(J_U)$ and $E(H[\overline U])$, respectively, yields $A'_U=A\cap E(J_U)=A_U$ and $A'_{\overline U}=A\cap E(H[\overline U])=A_{\overline U}$. Therefore, for a given edge set $A$, the ordered pair $(A_U,A_{\overline U})$ satisfying conditions \textup{(i)}, \textup{(ii)} and $A=A_U\cup A_{\overline U}$ is unique.

Conversely, suppose the ordered pair $(A_U,A_{\overline U})$ satisfies conditions \textup{(i)} and \textup{(ii)}, and let $A=A_U\cup A_{\overline U}$. By condition \textup{(i)}, $J_U|A_U$ is a connected spanning subgraph with vertex set $\{r(G)\}\cup U$. Hence, for each $u\in U$, there is a path in $J_U|A_U$ connecting $r(G)$ and $u$, and all edges of this path belong to $A_U\subseteq A$. Thus $\{r(G)\}\cup U\subseteq V(C_G(A))$. On the other hand, every edge in $A_U$ has both endpoints in $\{r(G)\}\cup U$, and every edge in $A_{\overline U}$ has both endpoints in $\overline U$. Therefore,
\begin{eqnarray*}
 A\cap\bigl(\delta_H(U)\cup E_G(\{r(G)\},\overline U)\bigr)=\varnothing,
\end{eqnarray*}
i.e., there is no edge in $A$ connecting $\{r(G)\}\cup U$ and $\overline U$. Suppose there exists $w\in\overline U\cap V(C_G(A))$. By the definition of the root component, there is a path in $C_G(A)$ from $r(G)$ to $w$. The edge along this path where it first enters $\overline U$ from $\{r(G)\}\cup U$ must belong to $\delta_H(U)\cup E_G(\{r(G)\},\overline U)$, contradicting $w\in\overline U\cap V(C_G(A))$. Hence $V(C_G(A))\cap\overline U=\varnothing$. Combining this with $\{r(G)\}\cup U\subseteq V(C_G(A))$, we have $V(C_G(A))=\{r(G)\}\cup U$. Since the edges of the root component $C_G(A)$ are precisely those edges of $A$ whose both endpoints lie in $\{r(G)\}\cup U$, we have $E(C_G(A))=A_U$. Therefore, $C_G(A)=J_U|A_U$ and $U_A=V(C_G(A))\setminus\{r(G)\}=U$. In summary, this correspondence is bijective.

By Lemma \ref{lem:rank}, for all $A$ satisfying $U_A=U$, we have $\rho_{\Gamma(G)}(A)=|V(C_G(A))|-1=|U|$. Since $G$ is connected and $|V(G)|=s+1$, by Lemma \ref{lem:rank}, we have $\rho(\Gamma(G))=\rho_{\Gamma(G)}(E(G))=s$. Therefore, the total contribution to $T(\Gamma(G);0,y)$ from all edge sets satisfying $U_A=U$ is
\begin{align*}
 &(-1)^{s-|U|}\sum_{\substack{A_U\subseteq E(J_U)\\J_U|A_U\text{is connected}}}(y-1)^{|A_U|-|U|}\sum_{A_{\overline U}\subseteq E(H[\overline U])}(y-1)^{|A_{\overline U}|}.
\end{align*}
Since $J_U$ is connected and $|V(J_U)|=|U|+1$, by the definition of the rank function, we have $\rho_{J_U}(E(J_U))=|U|$. For $A_U\subseteq E(J_U)$, the equality $\rho_{J_U}(A_U)=|U|$ holds if and only if $J_U|A_U$ is connected. Therefore, by the definition formula \eqref{eq:rooted-tutte} of the Tutte polynomial, we have
\begin{eqnarray*}
 \sum_{\substack{A_U\subseteq E(J_U)\\J_U|A_U\text{is connected}}}(y-1)^{|A_U|-|U|}=T(J_U;1,y).
\end{eqnarray*}
On the other hand, by the binomial theorem, we have
\begin{eqnarray*}
 \sum_{A_{\overline U}\subseteq E(H[\overline U])}(y-1)^{|A_{\overline U}|} = (1+(y-1))^{|E(H[\overline U])|}= y^{|E(H[\overline U])|}.
\end{eqnarray*}
Hence the total contribution for a fixed $U\in\mathcal A(G)$ is equal to
\begin{eqnarray*}
 (-1)^{s-|U|} y^{|E(H[V(H)\setminus U])|} T(J_U;1,y).
\end{eqnarray*}
Finally, summing over $U\in\mathcal A(G)$, we obtain
\begin{eqnarray*}
 T(\Gamma(G);0,y) =\sum_{U\in\mathcal A(G)}(-1)^{s-|U|} y^{|E(H[V(H)\setminus U])|}T(J_U;1,y).
\end{eqnarray*}
This is precisely formula \eqref{eq:expansion}.
\end{proof}

\begin{lemma}\rm(\cite{Guan2023})\label{lem:guan-coefficients}
For a graph $G = (V(G), E(G))$ and an integer $k \geq 0$,
the equality $[y^j] T(G;1, y) \\= \binom{|E(G)|-j-1}{|V(G)|-2}$
holds for all $j$ with $|E(G)| - |V(G)| + 1 - k \leq j \leq |E(G)| - |V(G)| + 1$
if and only if $G$ is $(k+1)$-edge connected.
\end{lemma}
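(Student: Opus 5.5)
The plan is a direct coefficient computation, organised by the edges deleted rather than the edges kept. Throughout, write $n=|V(G)|$, $m=|E(G)|$ and $g=m-n+1$. I assume $G$ is connected; this is implicit in the statement, since for $k=0$ the condition ``$1$-edge connected'' means connected. In our application the lemma is only applied to the connected graphs $J_U$.

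\textbf{Step 1: rewrite $T(G;1,y)$ as a sum over deletion sets.} By \eqref{eq:rooted-tutte}, only the sets $A$ with $\rho_G(A)=n-1$ survive at $x=1$, that is, the connected spanning subgraphs $G|A$. Put $B=E(G)\setminus A$ and $b=|B|$. Then $|A|-\rho_G(A)=g-b$. Let $c_b$ be the number of $b$-subsets $B\subseteq E(G)$ such that $G-B$ is connected. Then
\begin{equation*}
T(G;1,y)=\sum_{b=0}^{g}c_b\,(y-1)^{g-b},\qquad [y^{g-i}]\,T(G;1,y)=\sum_{b=0}^{i}(-1)^{i-b}\binom{g-b}{i-b}c_b .
\end{equation*}
Here $c_b=0$ for $b>g$, because $G-B$ then has fewer than $n-1$ edges. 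The key structural fact is that this system is unitriangular: the coefficient of $y^{g-i}$ depends only on $c_0,\dots,c_i$, and $c_i$ enters with coefficient $1$.

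\textbf{Step 2: identify the target values.} I want to show that the prescribed values $\binom{m-j-1}{n-2}$, with $j=g-i$, are exactly what the formula gives when $c_b=\binom{m}{b}$ for all $b\le i$. Note $m-(g-i)-1=n-2+i$. Using $(-1)^{i-b}\binom{g-b}{i-b}=\binom{i-g-1}{i-b}$ and Vandermonde's identity (valid for arbitrary upper entries), the check reduces to
\begin{equation*}
\sum_{b=0}^{i}\binom{m}{b}\binom{i-g-1}{i-b}=\binom{m+i-g-1}{i}=\binom{n-2+i}{i}=\binom{n-2+i}{n-2}.
\end{equation*}

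\textbf{Step 3: conclude both directions by triangularity.} By Steps 1 and 2 and induction on $i$, the coefficient identities hold for $i=0,1,\dots,k$ if and only if $c_i=\binom{m}{i}$ for $i=0,\dots,k$. The condition $c_i=\binom{m}{i}$ says that deleting any $i$ edges leaves $G$ connected. Requiring this for all $i\le k$ is precisely $(k+1)$-edge connectivity; it is also monotone in $i$, so the condition at $i=k$ alone already suffices.

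\textbf{Main obstacle.} The real work is the bookkeeping at the boundary.
\begin{itemize}
\item When $k>g$, the range of $j$ dips below $0$. There both sides need interpreting: the coefficient is $0$, and $\binom{m-j-1}{n-2}$ must be read with the paper's binomial conventions. I would instead cap $i$ at $g$, where $c_i=0$ forces the triangular argument to stop.
\item The degenerate cases $n=1$ and $n=2$ need separate attention, since then $\binom{\cdot}{n-2}$ has lower index $-1$ or $0$.
\end{itemize}
I would verify these edge cases separately, and if necessary restrict the lemma to the range $0\le k\le g$, which is all that is needed later in the paper.
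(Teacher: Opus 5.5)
The paper gives no proof of this lemma. It is quoted from \cite{Guan2023} and used only through Corollary~\ref{cor:leading}, that is, for connected loopless graphs with $k\in\{0,1\}$ and $k\le d$. So your argument replaces a citation rather than competing with an internal proof. As such, it is correct for connected $G$ with $|V(G)|\ge 2$ and $0\le k\le g$. The following steps all check out:
\begin{itemize}
\item the expansion $T(G;1,y)=\sum_b c_b(y-1)^{g-b}$ over deleted edge sets;
\item unitriangularity in $c_0,\dots,c_i$;
\item the upper-negation step $(-1)^{i-b}\binom{g-b}{i-b}=\binom{i-g-1}{i-b}$;
\item Chu--Vandermonde, giving $\binom{n-2+i}{i}$.
\end{itemize}
Loops and parallel edges cause no trouble, since $c_b$ only records connectivity. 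Your route buys a short, self-contained explanation of why the coefficients near the top degree detect exactly the edge cuts of size at most $k$.

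Your boundary discussion needs three corrections.

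(1) Your reason for assuming connectivity is not valid. In the direction from the identities to $(k+1)$-edge-connectivity, connectedness is part of what must be proved, so it cannot be read off the statement. The assumption is nevertheless necessary. For graphs with parallel edges, which the paper's conventions allow, the statement is false without it. Let $G$ consist of two parallel edges $uv$ and an isolated vertex $w$. Then $T(G;x,y)=x+y$ and $|E(G)|-|V(G)|+1=0$. For $k=0$ the only required identity is $[y^0](1+y)=1=\binom{1}{1}$, which holds although $G$ is disconnected. So connectedness must be an explicit hypothesis; it is satisfied wherever the paper uses the lemma.

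(2) The case $k>g$ is not an obstacle and needs no restriction. For $j<0$ no unusual binomial convention is involved. With $i=g-j$, the right side is $\binom{m-j-1}{n-2}=\binom{n-2+i}{n-2}$, an ordinary positive binomial coefficient when $n\ge 2$, while $[y^j]T(G;1,y)=0$. Hence the identities fail at $j=-1$. Correspondingly, $G$ is not $(g+2)$-edge-connected: deleting any $g+1$ edges (possible, since $m\ge g+1$ when $n\ge2$) leaves only $n-2$ edges. Equivalently, your triangular induction runs verbatim for every $i\ge 0$. For $i>g$ both sides of your Step~1 formula vanish, and at $i=g+1$ the induction correctly detects $c_{g+1}=0\neq\binom{m}{g+1}$.

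(3) The case $n=2$ is not degenerate, since $\binom{i}{i}=\binom{i}{0}$. Only $n=1$ is degenerate: there $\binom{i-1}{i}\neq\binom{i-1}{-1}$ at $i=0$. In that case the truth of the statement depends on whether $K_1$ is declared $(k+1)$-edge-connected. Corollary~\ref{cor:leading} treats $n=1$ separately anyway.
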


From the above lemma, we obtain the following corollary:

\begin{corollary}\rm\label{cor:leading}
Let $G=(V(G),E(G))$ be a connected loopless graph with $|V(G)|=n$, $|E(G)|=m$, and $d=m-n+1$. Then
\begin{enumerate}
 \item[\textup{(i)}] $\deg_yT(G;1,y)=d$, and the coefficient of $y^d$ in $T(G;1,y)$ is $1$;
 \item[\textup{(ii)}] If $G$ is bridgeless and $d\geq1$, then the coefficient of $y^{d-1}$ in $T(G;1,y)$ is $n-1$.
\end{enumerate}
\end{corollary}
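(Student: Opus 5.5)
Both parts of Corollary \ref{cor:leading} will be read off from Lemma \ref{lem:guan-coefficients}, using the trivial case $k=0$ for (i) and the case $k=1$ for (ii). The only other input is the standard fact that $T(G;1,y)$ is the generating function of connected spanning subgraphs.

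Since $G$ is connected and loopless, $\rho_G(A)=n-1$ exactly when $G|A$ is connected. Definition \eqref{eq:rooted-tutte} at $x=1$ therefore gives
\begin{equation*}
T(G;1,y)=\sum_{\substack{A\subseteq E(G)\\ G|A \text{ connected}}}(y-1)^{|A|-n+1}.
\end{equation*}
Every such $A$ has $|A|-n+1\le m-n+1=d$, with equality only for $A=E(G)$. Hence $\deg_y T(G;1,y)\le d$, and the coefficient of $y^d$ is the leading coefficient of $(y-1)^d$ coming from $A=E(G)$, namely $1$. This proves (i) directly.

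For the same statement via Lemma \ref{lem:guan-coefficients}, take $k=0$. Every connected graph is $1$-edge connected, so the lemma gives $[y^d]T(G;1,y)=\binom{m-d-1}{n-2}=\binom{n-2}{n-2}=1$. One edge case needs care: $n=1$, where the binomial $\binom{-1}{-1}$ appears. A loopless graph with $n=1$ has $m=0$, so $T=1$ and (i) holds trivially; I would dispose of this case separately.

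For (ii), take $k=1$. A connected bridgeless graph is $2$-edge connected. With $d\ge1$ we have $m\ge n\ge 2$: if $n=1$ then $m=0$, contradicting $d\ge 1$. Lemma \ref{lem:guan-coefficients} with $j=d-1$ then gives
\begin{equation*}
[y^{d-1}]T(G;1,y)=\binom{m-(d-1)-1}{n-2}=\binom{n-1}{n-2}=n-1.
\end{equation*}

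As a sanity check independent of the lemma, the coefficient of $y^{d-1}$ receives two contributions. The first is $-d$, from the second coefficient of $(y-1)^d$ at $A=E(G)$. The second is $m$, since each of the $m$ single-edge deletions $A=E(G)\setminus\{e\}$ is connected (no bridges) and contributes $(y-1)^{d-1}$. The total is $m-d=n-1$, as claimed.

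There is no real obstacle here. The only points needing care are checking that the hypotheses of Lemma \ref{lem:guan-coefficients} are met (connected means $1$-edge connected, and connected bridgeless means $2$-edge connected) and handling the degenerate binomial when $n=1$.
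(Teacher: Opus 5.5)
Your proposal is correct. Its lemma-based route is exactly the paper's proof: Lemma~\ref{lem:guan-coefficients} with $k=0$ for (i) and $k=1$ for (ii), after setting aside $n=1$, where the binomial degenerates.

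What you add is a self-contained alternative. From \eqref{eq:rooted-tutte} at $x=1$ you write $T(G;1,y)$ as the sum of $(y-1)^{|A|-n+1}$ over connected spanning subgraphs $G|A$. You read off the top coefficient from $A=E(G)$. You then obtain the next coefficient as $m-d=n-1$, from $A=E(G)$ together with the $m$ single-edge deletions.

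This gives two things the paper's argument does not:
\begin{enumerate}
\item[(a)] It proves $\deg_y T(G;1,y)\le d$. The paper takes this for granted when it writes $T(G;1,y)=\sum_{j=0}^{d}a_jy^j$ before invoking the lemma.
\item[(b)] It shows the corollary only uses the implication ``$(k+1)$-edge-connected $\Rightarrow$ coefficient formula'' for $k\le 1$, which your two-line count establishes directly. So the citation of Guan's characterization could be dropped, and $n=1$ would need no separate case.
\end{enumerate}

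The paper's version is shorter on the page and places the corollary inside Guan's general edge-connectivity framework.
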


\begin{proof}
For $T(G;1,y)=\sum_{j=0}^{d}a_jy^j$ let $j=d-i$, by Lemma \ref{lem:guan-coefficients}, 
we can obtain that $G$ is $(k+1)$-edge-connected if and only if for every integer $i$ satisfying $0\leq i\leq k$,
\begin{eqnarray}\label{eq:ver}
 a_{d-i}=\binom{n+i-2}{n-2}.
\end{eqnarray}

If $n=1$, then since $G$ is loopless and connected, we have $m=0$, hence $d=0$ and $T(G;1,y)=1$. In this case conclusion \textup{(i)} holds, while the condition $d\geq1$ in conclusion \textup{(ii)} is not satisfied. We now assume $n\geq2$.

Since $G$ is connected, $G$ is $1$-edge-connected, and $d=m-n+1\geq0$. Taking $k=0$ in formula \eqref{eq:ver},
 we have $a_d=\binom{n-2}{n-2}=1$. Since $T(G;1,y)=\sum_{j=0}^{d}a_jy^j$, we have $\deg_yT(G;1,y)\leq d$. Since $a_d=1\neq0$, we obtain $\deg_yT(G;1,y)=d$, and the coefficient of $y^d$ in $T(G;1,y)$ is $1$. This proves \textup{(i)}.

Now suppose $G$ is bridgeless and $d\geq1$. Then deleting any single edge does not destroy connectivity, so $G$ is $2$-edge-connected. Taking $k=1$ in formula \eqref{eq:ver},, we have $a_{d-1}=\binom{n-1}{n-2}=n-1$. Therefore, the coefficient of $y^{d-1}$ in $T(G;1,y)$ is $n-1$. This proves \textup{(ii)}.
\end{proof}

\section{Proof of Theorem \ref{thm:main}}
\label{sec:nonvanishing}

To prove Theorem \ref{thm:main}, we first give an important theorem.

\begin{theorem}\label{thm:nonvanishing}
Let $G=(V(G),E(G),r(G))$ be a simple connected rooted graph such that $G-r(G)$ is connected. Then
\begin{eqnarray*}
 T(\Gamma(G);0,y)\equiv 0 \quad \text{if and only if}\quad G\cong K_2.
\end{eqnarray*}
In particular, $G\not\cong K_2$ if and only if $x\nmid T(\Gamma(G);x,y)$.
\end{theorem}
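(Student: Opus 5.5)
The plan is to evaluate $T(\Gamma(G);0,y)$ through the expansion of Lemma \ref{lem:expansion} and to show that a suitable top coefficient in $y$ is nonzero unless $G\cong K_2$. The converse direction is immediate: $T(\Gamma(K_2);x,y)=x$ vanishes at $x=0$. The ``in particular'' statement follows because a polynomial in $\mathbb Z[x,y]$ is divisible by $x$ exactly when it vanishes at $x=0$. Throughout, write $s=|V(H)|$, $k=d_G(r)$, $\overline U=V(H)\setminus U$, and let $d_U$ denote the cyclomatic number of $J_U$.

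\textbf{Step 1 (degrees of the terms).} By Corollary \ref{cor:leading}(i), for each $U\in\mathcal A(G)$ the term indexed by $U$ in \eqref{eq:expansion} has $y$-degree $|E(H[\overline U])|+d_U$ and leading coefficient $(-1)^{s-|U|}$. Since $d_U=|E(J_U)|-|U|$, this degree can be rewritten as
\begin{eqnarray*}
|E(G)|-L(U),\qquad L(U)=|U|+|\delta_H(U)|+a_G(\overline U).
\end{eqnarray*}
The three endpoint cases give $L(\varnothing)=k$ and $L(V(H))=s$. For $\varnothing\neq U\subsetneq V(H)$, we have $|\delta_H(U)|\ge1$ because $H$ is connected. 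We also have $a_G(\overline U)=k-a_G(U)\ge k-|U|$ because $G$ is simple. Together these give $L(U)\ge k+|\delta_H(U)|\ge k+1$.

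\textbf{Step 2 (case $k<s$).} Here $U=\varnothing$ is the unique term attaining the minimum $L(U)=k$. Hence the coefficient of $y^{|E(G)|-k}$ in $T(\Gamma(G);0,y)$ is $(-1)^s\neq0$, and we are done.

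\textbf{Step 3 (case $k=s$, so $r$ is adjacent to every vertex of $H$).} In this case every $U\subseteq V(H)$ lies in $\mathcal A(G)$. The top degree $|E(G)|-s$ is attained by $U=\varnothing$ and by $U=V(H)$, so its coefficient is $(-1)^s+1$, which vanishes when $s$ is odd. In that situation I will look at the next coefficient, that of $y^{|E(G)|-s-1}$. It receives contributions from three sources.
\begin{enumerate}
\item[\textup{(a)}] The term $U=V(H)$ contributes the coefficient of $y^{d-1}$ in $T(G;1,y)$, where $d=|E(G)|-s$.
\item[\textup{(b)}] The term $U=\varnothing$ contributes nothing, since it is the monomial $\pm y^m$.
\item[\textup{(c)}] Each proper nonempty $U$ with $L(U)=s+1$ contributes its leading coefficient $(-1)^{s-|U|}$. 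Such $U$ are exactly those with $|\delta_H(U)|=1$, because $a_G(U)=|U|$ holds automatically here.
\end{enumerate}

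For (a), take $s\ge2$. Then $G$ is bridgeless. An edge $uv$ of $H$ lies in the triangle $ruv$. An edge $rv$ lies in the triangle $rvw$, where $w$ is any neighbour of $v$ in the connected graph $H$. Also $d\ge1$, so Corollary \ref{cor:leading}(ii) gives the contribution $s$.

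For (c), the sets $U$ with $|\delta_H(U)|=1$ come in complementary pairs $\{U,\overline U\}$, one pair for each bridge of $H$. Since $|U|+|\overline U|=s$ is odd, the two members of each pair carry opposite signs, and the whole contribution cancels.

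Hence the coefficient of $y^{|E(G)|-s-1}$ equals $s\neq0$. When $s=1$ and $k=1$ we have $G\cong K_2$, which is the exceptional case.

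I expect Step 3 to be the main obstacle. Two points need care there: handling the tie between the top terms, and checking the bridgelessness hypothesis of Corollary \ref{cor:leading}(ii). The pairing argument in (c) is the key observation that makes the next coefficient computable.
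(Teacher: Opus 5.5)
Your proposal is correct and matches the paper's proof step for step. It uses the same expansion from Lemma \ref{lem:expansion} and the same degree bookkeeping (your $L(U)$ is just $|E(G)|-D(U)$), the same split into $k<s$, $k=s$ with $s$ even, and $k=s$ with $s$ odd, the same triangle argument for bridgelessness, and the same complementary-pair cancellation at $y^{m-1}$. The one thing to add is the trivial case $|V(G)|=1$, where $T(\Gamma(G);x,y)=1$; Lemma \ref{lem:expansion} does not cover it because it assumes $|V(H)|\ge 1$, and the paper treats it separately.
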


\begin{proof}
If $G\cong K_2$, then by the definition formula \eqref{eq:rooted-greedoid-tutte} of the greedoid Tutte polynomial, direct computation gives $T(\Gamma(G);x,y)=x$, hence $T(\Gamma(G);0,y)=0$.

Now suppose $G\not\cong K_2$. If $|V(G)|=1$, then by simplicity, $E(G)=\varnothing$, hence $T(\Gamma(G);x,y)=1$ and $T(\Gamma(G);0,y)=1\neq0$, so the conclusion holds. We now assume $|V(G)|\geq2$, and adopt the notation defined in Section 2 before Lemma \ref{lem:expansion}. Since $H$ is connected, we have $s\geq1$. If $s=1$, then $V(G)=\{r(G),v\}$. Since $G$ is a connected simple graph, we must have $E(G)=\{r(G)v\}$, hence $G\cong K_2$, contradicting the assumption. Hence, $s\geq2$.

For any $U\in\mathcal A(G)$, denote the summand corresponding to $U$ in the expansion \eqref{eq:expansion} of Lemma \ref{lem:expansion} by
\begin{equation}\label{eq:sum}
\Phi_U(y)=(-1)^{s-|U|}y^{|E(H[V(H)\setminus U])|}T(J_U;1,y).
\end{equation}
Since $U\in\mathcal A(G)$, by the definition of $\mathcal A(G)$, $J_U$ is connected. Since $J_U$ is an induced subgraph of the simple graph $G$, $J_U$ is loopless. In the graph Tutte polynomial $T(J_U;1,y)$, $J_U$ denotes a simple graph. Here $|V(J_U)|=|U|+1$, so the cycle rank of $J_U$ is $|E(J_U)|-|V(J_U)|+1=|E(J_U)|-|U|$. Therefore, applying Corollary \ref{cor:leading}\textup{(i)} to $J_U$, we have $\deg_yT(J_U;1,y)=|E(J_U)|-|U|$, and the coefficient of $y^{|E(J_U)|-|U|}$ in $T(J_U;1,y)$ is $1$.

By the definitions of $J_U$ and $a_G(U)$, we have $E(J_U)=E(H[U])\cup E_G(\{r(G)\},U)$, hence $|E(J_U)|=|E(H[U])|+a_G(U)$. On the other hand, by the definition of $\delta_H(U)$, the edge set $E(H)$ has the disjoint decomposition $E(H)=E(H[U])\cup E(H[V(H)\setminus U])\cup\delta_H(U)$. Therefore, by the definition of $\Phi_U(y)$ and the above arguments, its $y$-degree is
\begin{align}
 D(U) &=|E(H[V(H)\setminus U])|+|E(J_U)|-|U|\notag\\
 &= |E(H[V(H)\setminus U])|+|E(H[U])|+a_G(U)-|U|\notag\\
 &= m-|\delta_H(U)|+a_G(U)-|U|.\label{eq:degree}
\end{align}
From formula \eqref{eq:degree}, we can write $D(U)=m-|\delta_H(U)|-(|U|-a_G(U))$. In Section 2, we have already proved that $0\leq a_G(U)\leq|U|$, so $|U|-a_G(U)\geq0$.
Also according to $|\delta_H(U)|\geq0$, we have $D(U)\leq m-|\delta_H(U)|\leq m$. Moreover, the monomial factor in the definition of $\Phi_U(y)$ only changes the $y$-degree, and its coefficient is $(-1)^{s-|U|}$. Together with the fact that the coefficient of $y^{|E(J_U)|-|U|}$ in $T(J_U;1,y)$ is $1$, we have that the highest coefficient of $\Phi_U(y)$ is $(-1)^{s-|U|}$.

We now consider the three cases for $U$ in $V(H)$.
When $U=\varnothing$, by the explanation of $J_\varnothing$ before Lemma \ref{lem:expansion} in Section 2, we have $J_\varnothing\cong K_1$, and by Lemma \ref{lem:expansion}, we have $T(J_\varnothing;1,y)=T(K_1;1,y)=1$. Substituting $U=\varnothing$ into formula \eqref{eq:sum}, and noting that $|E(H[V(H)])|=|E(H)|=m$, we have $\Phi_\varnothing(y)=(-1)^sy^m$.
When $\varnothing\subsetneq U\subsetneq V(H)$, if $\delta_H(U)=\varnothing$, then there is no edge in $H$ connecting $U$ and $V(H)\setminus U$, contradicting the connectivity of $H$. Hence $\delta_H(U)\neq\varnothing$, i.e., $|\delta_H(U)|\geq1$. From formula \eqref{eq:degree} and $a_G(U)\leq|U|$, we have $D(U)\leq m-|\delta_H(U)|\leq m-1$.
When $U=V(H)$, by the definitions of $a_G$ and $k$, we have $a_G(V(H))=|E_G(\{r(G)\},V(H))|=d_G(r(G))=k$.
By the definition of the edge cut, we have $\delta_H(V(H))=\varnothing$. Substituting these two equalities into \eqref{eq:degree}, we obtain $D(V(H))=m+k-s$.

Finally, before Lemma \ref{lem:expansion} in Section 2, we have already proved from the connectivity and simplicity of $G$ that $1\leq k\leq s$. Hence either $1\leq k<s$ or $k=s$.
In the latter case, $s$ is either even or odd. So we can consider three cases.

\textbf{Case 1}: $1\leq k<s$.

From formula \eqref{eq:degree}, we have $D(V(H))=m+k-s\leq m-1$. Combined with the upper bound $m-1$ for the degree of terms corresponding to nonempty proper subsets obtained from the same formula, we see that in \eqref{eq:expansion}, only $\Phi_\varnothing(y)$ contains the $y^m$ term. Since $T(J_\varnothing;1,y)=1$ from Lemma \ref{lem:expansion}, the coefficient of $y^m$ in $T(\Gamma(G);0,y)$ is $(-1)^s\neq0$, hence $T(\Gamma(G);0,y)\not\equiv0$.

\textbf{Case 2:} $k=s$, and $s$ is even.

Since $G$ is simple and $k=d_G(r(G))=s=|V(H)|$, the root $r(G)$ is adjacent to every vertex in $V(H)$ by exactly one edge. From formula \eqref{eq:degree} and Lemma \ref{lem:expansion}, only the summands corresponding to $U=\varnothing$ and $U=V(H)$ may have degree $m$. By Lemma \ref{lem:expansion} with $T(J_\varnothing;1,y)=1$, the coefficient of $y^m$ in the empty set term is $(-1)^s$. By Corollary \ref{cor:leading}\textup{(i)} and the definition of $\Phi_{V(H)}(y)$, the coefficient of $y^m$ in the full set term is $(-1)^{s-s}=1$. Since $s$ is even, the coefficient of $y^m$ in $T(\Gamma(G);0,y)$ is $(-1)^s+1=2\neq0$, hence $T(\Gamma(G);0,y)\not\equiv0$.

\textbf{Case 3:} $k=s$, and $s$ is odd.

Since $s\geq2$ and $s$ is odd, we have $s\geq3$.
When $U=\varnothing$, by Lemma \ref{lem:expansion} and $J_\varnothing\cong K_1$, we have $\Phi_\varnothing(y)=(-1)^sy^m$. When $\varnothing\subsetneq U\subsetneq V(H)$, by the connectivity of $H$, we have $|\delta_H(U)|\geq1$. Also by $a_G(U)\leq|U|$ and formula \eqref{eq:degree}, we have $D(U)\leq m-1$, so the corresponding $\Phi_U(y)$ contains no $y^m$ term.
When $U=V(H)$, by $k=s$ and formula \eqref{eq:degree}, we have $D(V(H))=m$.
By Corollary \ref{cor:leading}\textup{(i)}, the definition of $\Phi_U(y)$ and the above arguments, we have the coefficient of $y^m$ in $\Phi_{V(H)}(y)$ is $(-1)^{s-s}=1$. 
Therefore, by formula \eqref{eq:expansion} in Lemma \ref{lem:expansion}, the coefficient of $y^m$ in $T(\Gamma(G);0,y)$ is $(-1)^s+1=0$. 
To prove this case, we consider the coefficient of $y^{m-1}$.

First consider the full set term corresponding to $U=V(H)$. Since $k=s$ and $G$ is simple, 
the root $r(G)$ is adjacent to every vertex in $V(H)$ by exactly one edge. 
If $uv\in E(H)$, then $r(G)u,r(G)v\in E(J_{V(H)})$, so the edge $uv$ belongs to a triangle with vertex set $\{r(G),u,v\}$. 
If $r(G)v\in E(J_{V(H)})\setminus E(H)$, then since $H$ is connected and $s\geq3$, the vertex $v$ has a neighbor $w$ in $H$,
then $vw,r(G)w\in E(J_{V(H)})$, so the edge $r(G)v$ belongs to a triangle with vertex set $\{r(G),v,w\}$. 
Therefore, every edge of $J_{V(H)}$ belongs to some cycle. 
Since an edge is a bridge if and only if it belongs to no cycle, we have that $J_{V(H)}$ is bridgeless.

By the definition of $J_U$, we have $J_{V(H)}=G[\{r(G)\}\cup V(H)]=G$. Since $G$ is connected and simple, $J_{V(H)}$ is connected and loopless. Since $k=s$, we have $|V(J_{V(H)})|=s+1$ and $|E(J_{V(H)})|=m+s$, so the cycle rank of $J_{V(H)}$ is $(m+s)-(s+1)+1=m$. Since $H$ is connected and $s\geq3$, we have $m\geq s-1\geq2$. Thus $J_{V(H)}$ satisfies all the conditions of Corollary \ref{cor:leading}\textup{(ii)}, so the coefficient of $y^{m-1}$ in $T(J_{V(H)};1,y)$ is $|V(J_{V(H)})|-1=s$.
On the other hand, from the expansion \eqref{eq:expansion} in Lemma \ref{lem:expansion} and the definition of $\Phi_U(y)$, we have $\Phi_{V(H)}(y)=T(J_{V(H)};1,y)$. Hence the full set term contributes $s$ to the coefficient of $y^{m-1}$. Since $\Phi_\varnothing(y)=-y^m$, the empty set term contributes nothing to the coefficient of $y^{m-1}$.

Now we consider $\varnothing\subsetneq U\subsetneq V(H)$. Since $k=s$ and $G$ is simple, the root $r(G)$ is adjacent to every vertex in $V(H)$ by exactly one edge, so by the definition of $a_G(U)$, we have $a_G(U)=|U|$. Substituting this into \eqref{eq:degree}, we have $D(U)=m-|\delta_H(U)|$. Therefore, if $|\delta_H(U)|\geq2$, then $D(U)\leq m-2$, and the corresponding summand cannot contribute to the coefficient of $y^{m-1}$; only sets $U$ with $|\delta_H(U)|=1$ need to be considered. For each such $U$, by Corollary \ref{cor:leading}\textup{(i)} and the definition of $\Phi_U(y)$, its contribution to $y^{m-1}$ is $(-1)^{s-|U|}$.

Using the notation $\overline U$ from Section 2. By the definition of the edge cut, $\delta_H(\overline U)=\delta_H(U)$, so $|\delta_H(\overline U)|=1$. Moreover, for any $W\subseteq V(H)$, since $k=s$, $J_W$ contains a star spanning subgraph centered at $r(G)$ with vertex set $\{r(G)\}\cup W$, hence $J_W$ is connected. By the definition of $\mathcal A(G)$, we have $W\in\mathcal A(G)$; in particular, $U,\overline U\in\mathcal A(G)$. Since $\varnothing\subsetneq U\subsetneq V(H)$, the complement map $U\mapsto\overline U$ has no fixed points, so all nonempty proper subsets satisfying $|\delta_H(U)|=1$ are partitioned into pairwise disjoint pairs $\{U,\overline U\}$. For each such pair, since $|\overline U|=s-|U|$ and $s$ is odd, the sum of their contributions to $y^{m-1}$ is
$(-1)^{s-|U|}+(-1)^{s-|\overline U|}
=(-1)^{s-|U|}+(-1)^{|U|}=0$. Therefore, the contributions from all nonempty proper subset terms to $y^{m-1}$ cancel in pairs. Combining the contributions from the empty set term and the full set term, we have that the coefficient of $y^{m-1}$ in $T(\Gamma(G);0,y)$ is $s$, hence $T(\Gamma(G);0,y)\not\equiv0$.

In all three cases above, we have $T(\Gamma(G);0,y)\not\equiv0$. Therefore, when $G\not\cong K_2$, we have $T(\Gamma(G);0,y)\not\equiv0$. 
Finally, for any $P(x,y)\in\mathbb Z[x,y]$, $x\mid P(x,y)$ if and only if $P(0,y)\equiv0$. 
Taking $P(x,y)=T(\Gamma(G);x,y)$, we obtain that $G\not\cong K_2$ if and only if $x\nmid T(\Gamma(G);x,y)$.
\end{proof}

\begin{proof}[\textbf{Proof of Theorem \ref{thm:main}}]
Let $G_r=C_G(E(G))$. By Lemma \ref{lem:outside}, we have 
\begin{eqnarray*}
\operatorname{ord}_{x}T(\Gamma(G);x,y)=\operatorname{ord}_{x}T(\Gamma(G_r);x,y),\quad \ell_r(G)=\ell_r(G_r). 
\end{eqnarray*}
Therefore, it suffices to prove the conclusion for the root component $G_r$. In the following, we rename $G_r$ as $G$; now $G$ is a connected simple rooted graph.

If $V(G)=\{r(G)\}$, then by the connectivity of $G$, we have $E(G)=\varnothing$. By the definition \eqref{eq:rooted-greedoid-tutte} of the greedoid Tutte polynomial, we have $T(\Gamma(G);x,y)=1$, hence $\operatorname{ord}_{x}T(\Gamma(G);x,y)=\ell_r(G)=0$. So the conclusion holds in this case. We now consider the case $|V(G)|\geq2$.

Let $r=r(G)$, and let $\mathcal C(G-r)=\{H_1,\ldots,H_t\}$ be all connected components of $G-r$. For each $i\in\{1,\ldots,t\}$, let
\begin{eqnarray*}
 G_i=\bigl(G[V(H_i)\cup\{r\}],r\bigr).
\end{eqnarray*}

For any $i\in\{1,\ldots,t\}$, $H_i$ is by definition a connected component of $G-r$. Since $G$ is connected, there is at least one edge between $r$ and $V(H_i)$, so $G_i$ is connected. Moreover, $G_i$ is an induced subgraph of the simple graph $G$, and $G_i-r=H_i$ is connected. Therefore, each $G_i$ satisfies all the conditions of Theorem \ref{thm:nonvanishing}.

From the characterization of single-edge root components given after Lemma \ref{lem:product} in Section 2, we have that for any $i\in\{1,\ldots,t\}$, $G_i\cong K_2$ if and only if $V(H_i)=\{v_i\}$, $rv_i\in E(G)$, and $d_G(v_i)=1$. Therefore, the root branches satisfying $G_i\cong K_2$ are in one-to-one correspondence with the leaf vertices in $L_r(G)$, and their number is $\ell_r(G)$.

Fix $i\in\{1,\ldots,t\}$. If $G_i\cong K_2$, then $T(\Gamma(G_i);x,y)=x$, so $\operatorname{ord}_{x}T(\Gamma(G_i);x,y)=1$. If $G_i\not\cong K_2$, then by Theorem \ref{thm:nonvanishing}, we have $T(\Gamma(G_i);0,y)\not\equiv0$. By Definition \ref{def:nonzero}, this implies $\operatorname{ord}_{x}T(\Gamma(G_i);x,y)=0$. Hence, for each $i$, the polynomial $T(\Gamma(G_i);x,y)$ is nonzero.

By formula \eqref{eq:root-product} in Lemma \ref{lem:product}, and applying Lemma \ref{lem:ord-product} repeatedly to the product, we have
\begin{eqnarray*}
 \operatorname{ord}_{x}T(\Gamma(G);x,y)
 =
 \sum_{i=1}^{t}\operatorname{ord}_{x}T(\Gamma(G_i);x,y)
 =
 \bigl|\{i\in\{1,\ldots,t\}:G_i\cong K_2\}\bigr|
 =
 \ell_r(G).
\end{eqnarray*}
This completes the proof of Theorem \ref{thm:main}.
\end{proof}

\section{Conclusion}

Tutte's eponymous polynomial is perhaps the most widely studied two-variable graph and
matroid polynomial due to its many specializations, their vast breadth and the richness ofthe underlying theory.
In this paper, we have focused on Conjecture \ref{conj:3352} and proved that for simple rooted graphs, the highest power of $x$ dividing $T(\Gamma(G);x,y)$ is equal to the number of leaf vertices adjacent to the root. This result shows that, in the greedoid induced by a simple rooted graph, the lowest $x$-degree of the greedoid Tutte polynomial is precisely determined by the number of single-edge root branches at the root, thereby providing a clear graph-theoretic interpretation of this algebraic parameter.

\noindent{\bf Data Availability}\\
{No data were used to support this study.}

\noindent{\bf Acknowledgements:} This research is supported by NSFC (Nos. 12261071, 12471333)  and NSF of Qinghai Province (No. 2025-ZJ-902T).

\end{document}